\theoremstyle{plain} 
\newtheorem{thm}{Theorem}[section] 
\theoremstyle{definition}
\newtheorem{lem}{Lemma}[section] 
\theoremstyle{remark} 
\newtheorem{oss}{Remark}[section]
\begin{document}

\title{On dynamic random graphs with degree homogenization via anti-preferential attachment probabilities}


\author[1]{Umberto De Ambroggio}
\ead{uda22@bath.ac.uk}
\author[2]{Federico Polito\corref{cor1}}
\ead{federico.polito@unito.it}
\author[2]{Laura Sacerdote}
\ead{laura.sacerdote@unito.it}

\address[1]{Department of Mathematical Sciences, University of Bath, UK}
\address[2]{Mathematics Department ``G.~Peano'', University of Torino, Italy}

	\begin{abstract}
		We analyze a dynamic random undirected graph in which newly added vertices are connected
		to those already present in the graph
		either using, with probability $p$, an anti-preferential attachment mechanism or, with probability $1-p$, a preferential attachment mechanism. We derive the asymptotic degree distribution in the general case and study the asymptotic behaviour of the expected degree process in the general and that of the degree process in the pure anti-preferential attachment case. Degree homogenization mainly affects convergence rates for the former case and also the limiting degree distribution in the latter.
		Lastly, we perform a simulative study of a variation of the introduced model allowing for anti-preferential attachment
		probabilities given in terms of the current maximum degree of the graph.
	\end{abstract}

	\maketitle

	\section{Introduction}
	
		The appearance of the seminal paper of \cite{MR2091634} ignited a vast interest for preferential-attachment models of random graphs in many different fields of research. In the early 2000s, \cite{MR1824277} gave a rigorous mathematical formulation of the model described by \cite{MR2091634}. The literature on dynamic graphs governed by a preferential-attachment mechanism is vast and it is impossible to give a complete list of relevant references. However, the reader can refer to the book by \cite{VDH} and to \cite{newman2006structure} and \cite{MR2010377}, which would give an excellent overview. For generalizations of the basic preferential-attachment model of Barab\'asi and Albert see for instance \cite{MR1824277}, \cite{krapivsky2000connectivity}, \cite{krapivsky2001organization}, \cite{MR2522950}, \cite{MR2511283}, \cite{MR3059201}, \cite{betken2019fluctuations}, \cite{MR3076683}, and the book by \cite{VDH}.
		
		The preferential attachment paradigm has been heavily advocated as an explanation of the fact that degree distributions in real-world networks are well-described by power-laws (even if this might not be always the case: see for instance \cite{broido2019scale}).
		From a modeling perspective, classical preferential attachment random graph models implicitly assume that all vertices behave in a similar way, meaning that all of them tend to connect to vertices of high degree.
		However, recently there has been interest in random graphs models relaxing the classical preferential-attachment hypothesis but still exhibiting power-law degree distributions. In those models, newly added vertices are allowed to attach their edges by means of a mixture of uniform and preferential attachment mechanisms (see \cite{COOP} and \cite{PACH}).
		Apart from the possibility of creating more accurate models for real world networks, the idea of growing a graph using a mixture of uniform and preferential attachment mechanisms has the effect of producing a homogenization of vertices degrees. In other terms, the fact that some of the vertices will connect to older nodes chosen according to  a uniform distribution has (at least at an intuitive level) the effect of slowing down the growth of the degree of highly connected vertices, thus balancing the number of connections for different nodes. This idea of producing a homogenization of vertices degree has been further pursued by \cite{MR2119994} and more recently by \cite{MR2582718}, \cite{MR2770911} (see also \cite{lansky2014role}, \cite{MR2314629}, \cite{MR2783864} for models implementing edge deletion, and \cite{MR2896674,MR2609453} for more general models). In these models, at each discrete time step there is a chance that a vertex (selected uniformly at random from the set of vertices) is removed from the graph. If the vertex which has been selected uniformly at random is a hub, then many vertices (all vertices connected to it) will face a reduction of their degree (by a quantity corresponding to the number of edges that there were between them and the hub in case of a multigraph). Therefore, in this sense the removal of a highly connected vertex produces a local homogenization of the vertices degree. In the model appearing in \cite{MR2770911}, at each discrete time step $t$, the following three different things can occur. Provided that the number of vertices at time $t-1$ is larger or equal than unity:

		\begin{itemize}
			\item with probability $p_1$, a vertex is added and connected to a random number of older vertices selected uniformly at random from the set of vertices present in $G_{t-1}$, the graph at time $t-1$;
			\item with probability $p_2 \le 1-p_1$ a vertex is sampled uniformly at random from $G_{t-1}$, copied and added to the graph together with all its connections;
			\item with probability $p_3 = 1-p_1-p_2$  a vertex is sampled uniformly at random from $G_{t-1}$ and deleted from the graph together with all its connections.
		\end{itemize}

		The authors show that, if ``copying'' occurs more frequently than ``deletion'', then the graph exhibits a power-law degree distribution with exponent larger than unity. Viceversa, if ``deletion'' is more likely to occur than ``copying'', then the resulting graph exhibits a degree distribution of exponential type. An intuitive explanation of this fact is that in case deletion occurs very frequently then it is likely that from time to time a hub is selected for removal, thus reducing the degree of many of the vertices in the graph. Therefore, when deletion occurs rather frequently then a strong degree homegenization occurs. 
	
		For the sake of completeness, we lastly mention that a further model belonging to the class mentioned above is due to \cite{johansson2016deletion}.
				
		In the present paper we aim at capturing this homogenization effect of vertices degree, but without any vertex deletion. In particular, we study a dynamic random undirected graph model in which every newly added vertex is connected to $m\in \mathbb{N}^*=\{1,2,\dots\}$ random older vertices, either using an anti-preferential attachment or a preferential attachment mechanism, the former meaning that newly added vertices are more likely to connect to vertices of low degree. More precisely, at each discrete-time a new vertex is added to the graph. With probability $p\in [0,1]$ it selects the $m$ target vertices according to a suitable anti-preferential attachment rule (see the definition of the model in Section \ref{iniz}), and with probability $1-p$ using a classical preferential attachment mechanism.
		
		The pure anti-preferential attachment and the pure preferential attachment case can be recovered by setting $p=1$ and $p=0$, respectively.
		The simultaneous presence of a random fraction of the vertices chosen by the above mentioned mechanisms reflects in the change of the exponent of the degree distribution of the model.
		Specifically, the asymptotic degree distribution of such random graph has a right tail decaying as a power-law with exponent $(p-3)/(1-p)$, $p \in [0,1)$.
		Interestingly enough, for the definition of anti-preferential attachment mechanism that we propose (see Section \ref{iniz}), the mixed preferential attachment - anti-preferential attachment model (PA-APA model in the following), although defined in a different way, leads to a degree distribution resembling that in \cite{COOP}. Indeed, in the pure anti-preferential attachment regime the model tends to produce a homogenization of vertices' degree, similarly to a uniform attachment. In other terms, after a substantial amount of vertices have been added to the graph, the probabilistic rules (governing the attachment mechanisms of newly added vertices) in the mixed PA-UA model and the PA-APA model will be close. Hence, the model presented here can be seen as an alternative to the model of Cooper and Frieze to model phenomena presenting a limiting power-law degree distribution as that appearing, for suitably chosen parameters, in their paper.
		
		Moreover, we also analyze by means of simulations a variation of the anti-preferential attachment mechanism in which
		probabilities are calculated with respect to the current maximum degree of the graph process instead of with respect of
		the maximum theoretical degree.
		
Finally, before turning to outlining the paper organization, we recall that an anti-preferential attachment mechanism already appeared in the context of random mappings (\cite{MR2428980}) and in the recent paper
		by \cite{sendina2016assortativity}. Regarding the latter paper, the authors propose a method which generates networks characterized by scale-free degree distributions and tunable assortativity. The latter is a feature found in many online social and neural networks (see e.g.\ \cite{viswanath2009evolution},
		\cite{teller2014emergence} and \cite{de2014emergence}) which stands for the tendency of vertices to establish connections with other vertices of similar degree. The novelty of their approach is that they avoid to introduce assortativity in a pre-generated network by means of degree-preserving link permutations. In this way they  preserve intrinsic relevant characteristic of the network itself that otherwise would have been destroyed.
In their model, there are two distinct populations of vertices which are incrementally added to
an initial network by selecting a subgraph to connect to at random. One population follows preferential attachment, while the other connects via anti-preferential attachment: they link to lower degree vertices when added to the network.
More specifically, the model produces a network with $N$ vertices, constructed sequentially in discrete time by adding one vertex at a time.
The graph starts with a clique of $m \leq N_0 \ll N$ vertices, and at each time $1\leq t\leq N - N_0$ a vertex is added to the graph and connected to $m$ older vertices according to the following procedure:
\begin{enumerate}
	\item an \emph{anchor} vertex $j$ is selected uniformly at random from the set of vertices present at time $t-1$;
	\item a subgraph $G_j$  is constructed by including $j$ and all other vertices at graph distance less than or equal to $l$ from $j$ (in the paper $l=1$);
	\item with probability $1-p$ the new vertex selects $m$ vertices from the subgraph $G_j$ uniformly at random and connects to them; with probability $p$, the new vertex connects to the $m$ vertices of lowest degree in $G_j$.
\end{enumerate}
The resulting network exhibits a scale-free nature and presents a fully controllable level of global assortativity (which depends on $p$ and $m$).

Although different, the graph construction rules are similar in spirit to those at the basis of the definition of the PA-APA model we study in this paper (see Section \ref{iniz}).

Returning to the present paper, its structure is the following: Section \ref{iniz} describes the model of interest and suggests some generalizations. Section \ref{main} contains the main results. We start the analysis in Section \ref{iniz2}, where we analyze the asymptotic degree distribution which turns out to have a power-law tails with exponent $(p-3)/(1-p)$ (see Theorem \ref{propa} and Remark \ref{propa2}). This shows that even graphs growing with a mixed mechanism in which the anti-preferential attachment regime is dominant, have power-law degree distributions. Furthermore, by tuning the value of $p$, every exponent in $(-\infty,-3]$ can be generated. This fact
		also confirms that the robustness of the scale-free nature of the graph to important changes in the attachment mechanism,
		as shown in \cite{COOP} and \cite{PACH}, is here retained.
		In other words, a graph growing in large part according to an anti-preferential attachment mechanism still shows a power-law degree distribution if we let an infinitesimal possibility to the preferential attachment mechanism when building connections between vertices. The subsequent Sections \ref{iniz3} and \ref{iniz4} concern the asymptotic behaviour of the degree process for each given vertex.
		First, it is established (Theorem \ref{esta}, Section \ref{iniz3}) that the expected degree process is controlled by $t^{(1-p)/2}\ln t^p$, and then the almost sure convergence for the degree process is investigated in the
		special case of pure anti-preferential attachment (Section \ref{iniz4}).
		Section \ref{variation} concludes the paper presenting a simulation analysis of a variation of the PA-APA model
		where the probabilities are calculated with respect to the current maximum degree of the graph process.
		Finally, to make the paper more readable, all the proofs are postponed to Appendix \ref{app}.

	\section{PA-APA model}\label{iniz}
		
		Let $m\in \mathbb{N}^*$, $t \in \mathbb{N}$. The model we investigate produces a graph sequence that we denote by $\{G_t\colon t \in \mathbb{N}
		\}$ and which, for every time $t$, yields a graph of $t$ vertices and $mt$ edges. For $t=0$, $G_0$ is the
		empty graph and for $t \ge 1$ let us denote the vertices in $G_t$ by $v_1,\dots,v_t$. Let $\mathcal{F}_0$ be the trivial sigma-algebra and denote $\mathcal{F}_t$ the $\sigma$-algebra generated by the graph process up to time $t\geq 1$; more precisely,
		\begin{equation*}
			\mathcal{F}_t=\sigma(\{G_s\colon 1\leq s\leq t\}), \qquad t\geq 1.
		\end{equation*}
		For every $i\in \mathbb{N}^*$ we denote by $d(v_i,t)$ the degree of vertex $v_i$ at time $t \ge i$. We set $d(v_i,t)=0$ if $t < i$. The random graph process $(G_t)_{t\geq 1}$ evolves as follows. 

		Let $(Y_{t})_{t\geq 2}$ be a sequence of i.i.d.\ Bernoulli random variables of parameter $p \in [0,1]$
		independent of $(G_t)_{t\geq 1}$. Let $G_1$ be a graph consisting of a single vertex $v_1$ with $m$ self-loops. For every $t\in \mathbb{N}^*$, to construct $G_{t+1}$ from $G_t$, add a new vertex $v_{t+1}$ and then add $m$ edges between $v_{t+1}$ and vertices of $G_t$ (therefore vertices added at times $t\geq 2$ do not have self-loops). The $m$ target vertices in $G_t$ are chosen according to the following procedure, which admits multiple edges between distinct vertices.
		\begin{itemize}
			\item If $Y_{t+1}=0$ (which happens with probability $1-p$) we select $m$ random vertices $W_{t+1}^1,\dots,
				W_{t+1}^m$ from $G_t$ according to the preferential attachment mechanism
				\begin{equation}
					\label{prefa}
					\mathbb{P}(W_{t+1}^r=v_i|\mathcal{F}_t)=\frac{d(v_i,t)}{2mt},
				\end{equation}
				where $1\leq i\leq t$, independently for each $r\in \{1,\dots,m\}$.
			\item If $Y_{t+1}=1$ (which happens with probability $p$) we select $m$ random vertices $W_{t+1}^1,\dots, W_{t+1}^m$
				from $G_t$ according to the anti-preferential attachment mechanism
				\begin{equation}
					\label{aprefa}
					\mathbb{P}(W_{t+1}^r=v_i|\mathcal{F}_t)=\frac{2mt+1-d(v_i,t)}{t(2mt+1-2m)},
				\end{equation}
				where $1\leq i\leq t$, independently for each $r\in \{1,\dots,m\}$.
		\end{itemize}
	
		\begin{oss}
			Formula \eqref{aprefa} shows that (when $Y_{t+1}=1$) vertices with smaller degree are selected with higher probability by newly added vertices. This seems to be the simplest possible anti-preferential attachment mechanism one can imagine, and intuitively this formulation yields (for large $t$) a homogenization of the degrees similar to that induced by uniform attachment. This is shown in Theorem 2.1 below (case $p=1$). Indeed we will see that in the pure anti-preferential attachment regime the resulting degree distribution is geometric, which is the same degree distribution appearing in a uniform attachment model (see for instance \cite{MR3968514}).
		\end{oss}
		\begin{oss}
			The process $(Y_t)_{t\geq 2}$ encodes the information concerning the attachment mechanism chosen by vertices when added to the graph: at each time $t$ we add a vertex $v_{t+1}$ to $G_t$, we generate an independent Bernoulli r.v.\ $Y_{t+1}$ and:
			\begin{itemize}
				\item if $Y_{t+1}=1$ then we attach $m$ edges between $v_{t+1}$ and vertices of $G_t$, selected according to the
					anti-preferential attachment mechanism \eqref{aprefa};
				\item if $Y_{t+1}=0$ then we attach $m$ edges between $v_{t+1}$ and vertices of $G_t$, selected according to the
					preferential attachment mechanism \eqref{prefa}.
			\end{itemize}
			This is the most basic example in which the process $(Y_t)_{t\geq 2}$ models the way in which the
			two possible regimes coexist in the process' dynamics. More general cases taking into account
			different characteristics could be further considered. For instance a dependence structure in the
			choice process $Y_t$ could model the case in which the newly added vertex regime is somehow affected by
			the previous choices.
			Furthermore, formulas \eqref{prefa} and \eqref{aprefa} are both special cases of a general attachment rule in which the
			target vertices are chosen according to the probabilities $\mathbb{P}(W_{t+1}^r=v_i|\mathcal{F}_t)=h(d(v_i,t))/\sum_{j=1}^t h(d(v_j,t))$, $1 \le i \le t$, $r\in \{1,\dots,m\}$, where $h$ is strictly positive and bounded.
		\end{oss}

	\section{Main results}\label{main}

		\subsection{Degree distribution}\label{iniz2}

			We start by introducing some notation. For $m\leq k\leq 2mt$,
			and $m,t \in \mathbb{N}^*$, we define:
			\begin{itemize}
				\item $H(k,t+1)=p\frac{2mt+1-k}{t(2mt+1-2m)}+(1-p)\frac{k}{2mt}$, the unconditional attachment
					probability with any rule to a given vertex with degree $k$ at time $t$;
				\item $K(k,t+1)=1-H(k,t+1)$;
				\item $N_k(t)=\sum_{j=1}^{t}\mathbbm{1}_{\{d(v_j,t)=k\}}$, the number of vertices of $G_t$ with degree $k$;
				\item $P(k,t)=\mathbb{E}[N_k(t)/t]$, the expected proportion of vertices of $G_t$ with degree $k$;
				\item $Q(k,t)=\frac{1}{t-1}\sum_{j=2}^{t}\mathbb{P}(d(v_j,t)=k)$, $t\geq 2$;
				\item $f(v_i,k,t)=\mathbb{P}\Bigl(d(v_i,t)=k,d(v_i,s)\neq k \hspace{0.15cm} \forall s=i,\dots, t-1\Bigr)$,
					and $f(v_i,k,i) = \delta_{km}$, where $i \ge 2$, $m \le k \le (t-i+1)m$, $t\geq i+1$, and $\delta_{km}$ is the Kroenecker's delta.
					The function $f$ is the probability
					that vertex	$v_i$ has degree $k$ for the first time at time $t$.
			\end{itemize}
			Henceforth we adopt the conventions that empty products equal unity and empty sums equal zero.
			
			\begin{thm}
				\label{propa}
				Let $m\in \mathbb{N}^*$ and write $P(k)= \lim_{t\rightarrow \infty}P(k,t)$.
				If $p\in [0,1)$,
				\begin{equation*}
					P(k) = \frac{2}{2+m+mp}\cdot \frac{\xi(k)}{\xi(m)}, \qquad k \ge m,
				\end{equation*}
				with $\xi(k) = \Gamma\left(k+2m\frac{p}{1-p}\right)/\Gamma\left(k+1+\frac{2(m+1)}{1-p}\right)$ and where $\Gamma$ is the Euler's Gamma function.

				If $p=1$,
				\begin{equation*}
					P(k)=\frac{1}{m+1}\left(1-\frac{1}{m+1}\right)^{k-m}, \qquad k\geq m.
				\end{equation*}
			\end{thm}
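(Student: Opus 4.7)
The plan is to run a master-equation argument followed by a Ces\`aro--Stolz passage to the limit. I would first write down a one-step recursion for $n_k(t) := \mathbb{E}[N_k(t)]$. Conditional on $\mathcal{F}_t$, for each existing vertex $v_i$ the degree increment $d(v_i, t+1) - d(v_i, t)$ is binomial with parameters $m$ and $H(d(v_i, t), t+1)$, because the $m$ edges out of $v_{t+1}$ are placed independently under either rule. Since $H(k, t+1) = O(1/t)$ uniformly in $m \le k \le 2mt$, the expansions $(1-H)^m = 1 - mH + O(1/t^2)$ and $\binom{m}{j} H^j (1-H)^{m-j} = O(1/t^2)$ for $j \ge 2$ isolate the one-edge contribution at leading order. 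Accounting for the deterministic bump $\delta_{k,m}$ from the freshly added vertex $v_{t+1}$ (which has degree exactly $m$) then gives
\begin{equation*}
n_k(t+1) = n_k(t) + m\bigl[H(k-1, t+1)\, n_{k-1}(t) - H(k, t+1)\, n_k(t)\bigr] + \delta_{k, m} + O(1/t).
\end{equation*}

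Next I would compute the asymptotics $m t\, H(k, t+1) \to \alpha_k := mp + (1-p)k/2$ (with $O(1/t)$ correction), rewrite the recursion in terms of $P(k, t) = n_k(t)/t$, and apply Stolz--Ces\`aro by induction on $k \ge m$ (the base $P(m-1, t) \equiv 0$ is automatic since no vertex ever has degree below $m$). This produces existence of $P(k) = \lim_{t \to \infty} P(k, t)$ together with the fixed-point identity
\begin{equation*}
(1 + \alpha_k)\, P(k) = \alpha_{k-1}\, P(k-1) + \delta_{k, m}, \qquad k \ge m.
\end{equation*}
For $p < 1$, the case $k = m$ (where the $\alpha_{k-1} P(k-1)$ term vanishes) yields $P(m) = 1/(1 + \alpha_m) = 2/(2 + m + mp)$, matching the prefactor in the statement. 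For $k > m$, the ratio $P(k)/P(k-1) = \alpha_{k-1}/(1 + \alpha_k)$ reorganises, after clearing a factor $(1-p)/2$ from numerator and denominator, into the form $(k - 1 + a)/(k + c)$ with $a = 2mp/(1-p)$; telescoping from $m+1$ to $k$ expresses $P(k)/P(m)$ as a quotient of Gamma functions, namely $\xi(k)/\xi(m)$. For $p = 1$ the limit $\alpha_k = m$ is constant, the recursion collapses to $(m+1)P(k) = m\, P(k-1)$ for $k > m$ with $P(m) = 1/(m+1)$, whose unique solution is the geometric law of the second claim.

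The delicate point is the passage to the limit in the second step: Stolz--Ces\`aro produces $P(k)$ only once the right-hand side of the rescaled recursion is known to converge, which in turn requires $P(k-1, t) \to P(k-1)$ strongly enough that the product $m t\, H(k-1, t+1)\, P(k-1, t)$ is controlled uniformly in $t$. I would handle this by proving inductively on $k$ that $|P(k, t) - P(k)|$ decays at a polynomial rate via a discrete Gronwall-type estimate applied to the affine recursion with contraction factor $1 - m H(k, t+1)$; the $O(1/t)$ remainder from the master equation then combines safely with the inductive error coming from $P(k-1, t)$. Working through $Q(k, t)$ rather than $P(k, t)$, i.e.\ excluding the atypical starting vertex $v_1$, is a convenient bookkeeping device but does not alter the asymptotic analysis.
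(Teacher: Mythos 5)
Your argument is correct and reaches the same fixed-point recursion $P(k)=\frac{2mp+(1-p)(k-1)}{2+2mp+(1-p)k}\,P(k-1)$ with $P(m)=2/(2+m+mp)$ that the paper derives, but by a genuinely more direct route. The paper does not write a one-step master equation for $\mathbb{E}[N_k(t)]$: it works with $Q(k,t)=\frac{1}{t-1}\sum_{j\ge 2}\mathbb{P}(d(v_j,t)=k)$, decomposes $\mathbb{P}(d(v_j,t)=k)$ over the first-passage time to degree $k$ (Lemmas \ref{med} and \ref{ult}), exchanges the order of summation, and only then applies Stolz--Ces\`aro to sequences built from products of $K(k,\cdot)^m$; the multi-edge jumps are disposed of by showing $L_{k,l}(t)\to 0$ for $l\ge 2$, which is the analogue of your $O(1/t^2)$ bound on the $j\ge 2$ binomial terms. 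Your master-equation derivation short-circuits the first-passage bookkeeping and is shorter, at the cost of the (harmless for fixed $k$, but literally false) claim that $H(k,t+1)=O(1/t)$ uniformly up to $k=2mt$ --- the preferential part is $\Theta(1)$ for $k\asymp t$, so you should state the bound for fixed $k$ only. Your closing worry about needing a polynomial rate for $P(k-1,t)\to P(k-1)$ is unfounded: since $mtH(k-1,t+1)$ converges to a finite limit, plain convergence of $P(k-1,t)$ already makes the Stolz--Ces\`aro quotient converge, which is exactly how the paper runs the induction, so the discrete Gronwall step can be dropped. Finally, note that your telescoped answer, with denominator shift $c=2(mp+1)/(1-p)$, is the one actually implied by the recursion and by the advertised tail exponent $(p-3)/(1-p)$; the $2(m+1)/(1-p)$ appearing in the printed definition of $\xi$ seems to be a typo in the statement, not an error on your part.
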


			\begin{oss}\label{propa2}
				Using the well-known asymptotic expansion of ratio of gamma functions (\cite{MR0043948}) we easily derive that, if $p \in [0,1)$,
				\begin{align}
					P(k) \sim k^{\frac{p-3}{1-p}}
				\end{align}
				for large values of $k$.
				No matter how large we choose $p$ (i.e.\ how likely new added vertices attach their edges using an anti-preferential attachment rule), the degree distribution of the mixed model remains a power-law.
				Hubs, once formed, remain such, thus producing a heavy tail in the degree distribution.	
				Moreover, notice that the mixed PA-APA model is able to recover any power-law with exponent in $(-\infty,-3]$ simply by modifying the parameter $p$.
			\end{oss}

			\begin{oss}[Convergence in probability of $N_k(t)/t$]
				Define the Doob martingale $(M_n)_{0 \le n \le t}$ such that $M_n=\mathbb{E} [N_k(t)|\mathcal{F}_n]$
				(see \cite{MR0002052}). Note that $|M_n-M_{n-1}|\le mt$ for each $0 \le n \le t$. By applying
				Azuma--Hoeffding inequality we get the bound $\mathbb{P}(|M_t-M_0|\ge \sqrt{t\ln t})\le \exp (-\ln t /(8m^2))$.
				Then, letting $t \to \infty$ and considering Theorem \ref{propa}, we obtain
				\begin{align*}
					N_k(t)/t \overset{\mathbb{P}}{\to}P(k), \qquad k \ge m.
				\end{align*} 
			\end{oss}

		\subsection{Asymptotic behavior of the expected degree of a given vertex}\label{iniz3}
				
			We move now to analyzing the rate of divergence of the expected degree $\mathbb{E}[d(v_i,t)]$, for fixed $i\in \mathbb{N}^*$, as $t\rightarrow \infty$. In order to simplify the notation we introduce two quantities which will appear often in the computations involving the expected degree of a given vertex:
			\begin{align*}
				c_l=\frac{2ml+1}{l(2ml+1-2m)}, \qquad e_l=\frac{m}{l(2ml+1-2m)},
			\end{align*}
			where $l \in \mathbb{N}^*$.
			\begin{thm}
				\label{waif}
				Let $m\in \mathbb{N}^*$, $p\in [0,1]$. Then for every $t\geq i$,
				\begin{align}
					\label{wwaif}
					\mathbb{E}[d(v_i,t)]&=(1+\delta_{i1})m\prod_{l=i}^{t-1}C(l,p,m)\notag\\
					&+\sum_{l=i}^{t-1}m\,p\,c_l\prod_{h=l+1}^{t-1}C(h,p,m),
				\end{align}
				where $C(l,p,m)=\left(1+\frac{1-p}{2l}-p\, e_l\right)$ and where $\delta_{ij}$ is the Kroenecker's delta.
			\end{thm}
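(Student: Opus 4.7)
The plan is to derive a one-step linear recurrence for $\mathbb{E}[d(v_i,t)]$ and then solve it explicitly by induction on $t$.

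First, I would condition on $\mathcal{F}_t$ and on $Y_{t+1}$. When a new vertex $v_{t+1}$ is added at step $t+1$, it throws $m$ edges, each independently hitting $v_i$ with the probability in \eqref{prefa} if $Y_{t+1}=0$ or in \eqref{aprefa} if $Y_{t+1}=1$. Hence the expected increment of $d(v_i,t)$, given $\mathcal{F}_t$ and $Y_{t+1}$, equals $m$ times that probability. Averaging over $Y_{t+1}$ (which is independent of $\mathcal{F}_t$) gives
\begin{align*}
\mathbb{E}[d(v_i,t+1)-d(v_i,t)\mid \mathcal{F}_t]
&=(1-p)\,\frac{d(v_i,t)}{2t}\\
&\quad+p\,\frac{m(2mt+1-d(v_i,t))}{t(2mt+1-2m)}.
\end{align*}
Using the definitions of $c_l$ and $e_l$, the right-hand side rearranges into $\bigl(\tfrac{1-p}{2t}-p\,e_t\bigr)d(v_i,t)+p\,m\,c_t$, and taking unconditional expectations gives the recurrence
\begin{equation*}
\mathbb{E}[d(v_i,t+1)]=C(t,p,m)\,\mathbb{E}[d(v_i,t)]+p\,m\,c_t.
\end{equation*}

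Next I would pin down the initial condition $\mathbb{E}[d(v_i,i)]=(1+\delta_{i1})m$: for $i\ge 2$ the vertex $v_i$ is born with exactly $m$ incident edges, while $v_1$ starts out with $m$ self-loops, each contributing $2$ to its degree, yielding $d(v_1,1)=2m$. This is the only place where the indicator $\delta_{i1}$ enters the final formula, so it is worth highlighting carefully.

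I would then solve the recurrence by induction on $t\ge i$. The base case $t=i$ holds with the usual conventions that empty products equal $1$ and empty sums equal $0$, giving exactly $(1+\delta_{i1})m$. For the inductive step, substituting the claimed expression \eqref{wwaif} into the recurrence and peeling off the factor $C(t,p,m)$ from the product, together with the new term $p\,m\,c_t$ (corresponding to $l=t$ in the sum), matches the expression at time $t+1$. This is essentially bookkeeping with telescoping products.

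There is no real conceptual obstacle: the whole argument reduces to computing the correct conditional expectation and unrolling a scalar linear recurrence. The only subtlety is getting the initial condition for $v_1$ right (because of the $m$ self-loops), and making sure the coefficients $c_l$ and $e_l$ are correctly identified so that the recurrence factor matches the definition of $C(l,p,m)$.
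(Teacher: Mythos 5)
Your proposal is correct and follows essentially the same route as the paper: both derive the one-step conditional expectation $\mathbb{E}[d(v_i,t+1)\mid\mathcal{F}_t]=C(t,p,m)\,d(v_i,t)+p\,m\,c_t$ from the binomial increment and then unroll the resulting linear recurrence by induction from the initial condition $\mathbb{E}[d(v_i,i)]=(1+\delta_{i1})m$. The only cosmetic difference is that you treat the $i=1$ self-loop case explicitly, whereas the paper defers it as an easy variant.
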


			\begin{oss}
				\label{sqrt}
				Let $t\geq i$. Notice that when $p=0$ (i.e.\ in the pure preferential attachment model) the expected degree
				\eqref{wwaif} reduces to
				\begin{equation*}
					\mathbb{E}[d(v_i,t)]
					=(1+\delta_{i1})m\frac{\Gamma(i)\Gamma(t+1/2)}{\Gamma(i+1/2)\Gamma(t)}.
				\end{equation*}
				Exploiting the fact that $\Gamma(t+1/2)/\Gamma(t) = \sqrt{t}(1+O(1/t))$, it is immediate to see that, for every $i\in \mathbb{N}^*$,
				\begin{equation*}
					\lim_{t \rightarrow\infty}
					\mathbb{E}\left(\frac{d(v_i,t)}{\sqrt{t}}\frac{\Gamma(i+1/2)}{\Gamma(i)} \right)
					=(1+\delta_{i1})m.
				\end{equation*}
				Therefore, in the preferential attachment case, the expected degree of any given vertex grows as $\sqrt{t}$ (see e.g.\ \cite{VDH}, Chapter 8).
				On the other hand, when $p=1$ (i.e.\ in the pure anti-preferential attachment model), from \eqref{wwaif},
				\begin{align*}
					\mathbb{E}[d(v_i,t)] &= (1+\delta_{i1})m\prod_{l=1}^{t-1}\left(1-e_l\right) \\
					&+ \sum_{l=1}^{t-1}m\,c_l\prod_{h=l+1}^{t-1}\left(1-e_h\right).
				\end{align*}
				
				Next theorem and the following Remark \ref{loo} show that, in the pure anti-preferential regime,
				the expected degree of a vertex grows as $\ln t$.
			\end{oss}

			\begin{thm}
				\label{sqrtno}
				Let $m\in \mathbb{N}^*$ and $i\geq 2$. In the pure anti-preferential attachment model $(p=1)$ we have that
				\begin{equation}
					\label{cali}
					\lim_{t \rightarrow \infty}\mathbb{E}\left(\frac{d(v_i,t)}{\sum_{l=i}^{t-1}\ln\left(1+c_l\right)
					\prod_{h=l+1}^{t-1}\left(1-e_h\right)}\right)=m.
				\end{equation}
			\end{thm}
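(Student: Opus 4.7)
The plan is to exploit the explicit closed form from Theorem \ref{waif}. With $p = 1$ and $i \ge 2$ (so $\delta_{i1} = 0$),
\begin{equation*}
\mathbb{E}[d(v_i,t)] = m \prod_{l=i}^{t-1}(1 - e_l) + m A_t, \quad A_t := \sum_{l=i}^{t-1} c_l \prod_{h=l+1}^{t-1}(1 - e_h),
\end{equation*}
and I denote the deterministic denominator in \eqref{cali} by $B_t := \sum_{l=i}^{t-1} \ln(1 + c_l) \prod_{h=l+1}^{t-1}(1 - e_h)$. The first thing I record is the elementary asymptotics $c_l = 1/l + O(1/l^2)$ and $e_l = O(1/l^2)$ as $l \to \infty$. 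These give $\sum_{l \ge 2} e_l < \infty$; combined with $e_l \in (0,1)$ for $l \ge 2$, this yields the uniform two-sided bound $0 < \gamma \le \prod_{h=l+1}^{t-1}(1 - e_h) \le 1$ valid for all $i \le l < t$, where $\gamma := \prod_{h=2}^{\infty}(1 - e_h) > 0$.

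Next I would show that the boundary term $m \prod_{l=i}^{t-1}(1 - e_l)$ is negligible relative to $B_t$. Indeed this product is uniformly bounded in $t$ by the previous step, whereas $B_t \to \infty$: using $\ln(1 + c_l) \ge c_l/2$ eventually, together with the lower bound $\gamma$ on the inner product, one gets $B_t \ge C \sum_{l=i}^{t-1} 1/l$, which diverges logarithmically. Hence $m \prod_{l=i}^{t-1}(1 - e_l)/B_t \to 0$, and it suffices to prove that $A_t/B_t \to 1$.

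For the core step I would compare $A_t$ and $B_t$ termwise via the elementary inequality $0 \le x - \ln(1+x) \le x^2/2$ for $x \ge 0$, obtaining
\begin{equation*}
0 \le A_t - B_t = \sum_{l=i}^{t-1}\bigl(c_l - \ln(1+c_l)\bigr)\prod_{h=l+1}^{t-1}(1 - e_h) \le \frac{1}{2}\sum_{l=i}^{\infty} c_l^2,
\end{equation*}
which is finite because $c_l^2 = O(1/l^2)$. Thus $A_t - B_t$ is bounded in $t$, and since $B_t \to \infty$, $A_t/B_t = 1 + (A_t - B_t)/B_t \to 1$; combining this with the previous paragraph yields \eqref{cali}. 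The only mildly delicate point is the uniform positive lower bound $\gamma$ on the inner product as both $l$ and $t$ vary, but the summability of $(e_l)_{l \ge 2}$ makes this routine, and no finer asymptotic analysis of the product is required.
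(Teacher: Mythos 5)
Your proposal is correct and follows essentially the same route as the paper's proof: isolate the boundary term $m\prod_{l=i}^{t-1}(1-e_l)$ and kill it using the divergence of the denominator, then compare the two sums termwise via $0\le c_l-\ln(1+c_l)\le c_l^2/2$ together with the summability of $c_l^2$ and the uniform positive lower bound on $\prod_{h=l+1}^{t-1}(1-e_h)$ coming from $\sum_l e_l<\infty$. The only cosmetic difference is that you justify $\sum_l e_l<\infty$ by the direct estimate $e_l=O(1/l^2)$ where the paper invokes Raabe's test, and you bound the denominator below by $\gamma\sum_l c_l/2$ where the paper uses $\prod_{h=i}^{t-1}(1-e_h)\sum_l\ln(1+1/l)$; these are interchangeable.
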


			\begin{oss}
				\label{loo}
				To understand better the rate of divergence of the degree process, observe that the denominator of \eqref{cali}
				has the following limiting behaviour. Since
				\begin{align*}
					&\prod_{h=i}^{t-1}\left(1-e_h\right)\sum_{l=i}^{t-1}\ln\left(1+c_l\right)\\
					&\leq \sum_{l=i}^{t-1}\ln\left(1+c_l\right)\prod_{h=l+1}^{t-1}\left(1-e_h\right)
					\leq \sum_{l=i}^{t-1}\ln\left(1+c_l\right),
				\end{align*}
				we have for every $t\geq i+1$,
				\begin{align*}
					0&<\prod_{h=i}^{t-1}\left(1-e_h\right)\\
					&\leq \frac{\sum_{l=i}^{t-1}\ln\left(1+c_l\right)\prod_{h=l+1}^{t-1}\left(1-e_h\right)}{\sum_{l=i}^{t-1}\ln\left(1+c_l\right)}\leq 1.
				\end{align*}
				Thus
				\begin{align*}
					0&<\liminf_{t\rightarrow \infty}\frac{\sum_{l=i}^{t-1}\ln\left(1+c_l\right)\prod_{h=l+1}^{t-1}\left(1-e_h\right)}{\sum_{l=i}^{t-1}\ln\left(1+c_l\right)}\\
					&\leq \limsup_{t\rightarrow \infty}\frac{\sum_{l=i}^{t-1}\ln\left(1+c_l\right)\prod_{h=l+1}^{t-1}\left(1-e_h\right)}{\sum_{l=i}^{t-1}\ln\left(1+c_l\right)}\leq 1,
				\end{align*}
				so that
				\begin{align*}
					&\sum_{l=i}^{t-1}\ln\left(1+c_l\right)\prod_{h=l+1}^{t-1}\left(1-e_h\right)\\	&=\Theta\left(\sum_{l=i}^{t-1}\ln\left(1+c_l\right)\right)=\Theta(\ln t),
				\end{align*}
				as $\sum_{l=i}^{t-1}\ln\left(1+1/l\right)=\ln t-\sum_{l=1}^{i-1}\ln\left(1+1/l\right)$.
			\end{oss}
		
			Therefore, from Remarks \ref{sqrt} and \ref{loo}, in the classical preferential attachment model ($p=0$) the expected degree of any given vertex grows as $\sqrt{t}$, whereas in the pure anti-preferential attachment case ($p=1$) it grows as $\ln t$. Next result tells us that, in the mixed PA-APA model ($p \in (0,1)$), the growth of the expected degree is controlled by $t^{(1-p)/2}\ln t^p$.
			
			\begin{thm}\label{esta}
				In the mixed PA-APA model of parameter $p\in (0,1)$ we have that $\mathbb{E}[d(v_i,t)]=O\left(t^{(1-p)/2}\ln t^p\right)$, $i \ge 2$.
			\end{thm}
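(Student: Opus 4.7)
The plan is to exploit directly the explicit expression from Theorem \ref{waif}. Since $i \geq 2$ the Kronecker delta vanishes, and
\[
\mathbb{E}[d(v_i,t)] = m \prod_{l=i}^{t-1} C(l,p,m) + m p \sum_{l=i}^{t-1} c_l \prod_{h=l+1}^{t-1} C(h,p,m).
\]
It is therefore enough to show that each of these two pieces is $O(t^{(1-p)/2})$, which is even a bit stronger than what the statement requires (the extra $\ln t^p$ factor being a coarser envelope).

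The key ingredient is an asymptotic estimate for the product $\prod_{l=l_0}^{t-1} C(l,p,m)$. Observe that $e_l = m/[l(2ml+1-2m)] = O(1/l^2)$, so $C(l,p,m) - 1 = (1-p)/(2l) + O(1/l^2)$. Choosing $l_0$ large enough that $C(l,p,m) \in [1/2, 3/2]$ for all $l \geq l_0$, Taylor expansion of $\ln(1+x)$ gives $\ln C(l,p,m) = (1-p)/(2l) + O(1/l^2)$. Summing and using $\sum_{l=l_0}^{t-1} 1/l = \ln t + O(1)$ together with convergence of $\sum 1/l^2$, I obtain $\prod_{l=l_0}^{t-1} C(l,p,m) = \Theta(t^{(1-p)/2})$, and the estimate transfers to starting indices smaller than $l_0$ up to a bounded multiplicative constant. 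In particular, for any $l \in \{i,\dots,t-1\}$,
\[
\prod_{h=l+1}^{t-1} C(h,p,m) = O\!\left((t/l)^{(1-p)/2}\right)
\]
uniformly in $l$.

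Applied to the first summand, this immediately gives $m \prod_{l=i}^{t-1} C(l,p,m) = O(t^{(1-p)/2})$. For the second summand, the elementary bound $c_l = O(1/l)$ (clear from the definition of $c_l$) combined with the uniform partial-product estimate yields
\[
\sum_{l=i}^{t-1} c_l \prod_{h=l+1}^{t-1} C(h,p,m) = O\!\left(t^{(1-p)/2} \sum_{l=i}^{t-1} l^{-(3-p)/2}\right).
\]
For $p \in (0,1)$ the exponent $(3-p)/2$ exceeds $1$, so the series converges and the second summand is $O(t^{(1-p)/2})$, a fortiori $O(t^{(1-p)/2} \ln t^p)$. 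The $\ln t^p$ factor in the statement interpolates smoothly with the $p = 1$ regime of Remark \ref{loo}, where the analogous series diverges logarithmically.

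The main technical hurdle is ensuring that $C(l,p,m) > 0$ on the whole range of the product so that taking logarithms is legitimate; this can fail only for small $l$, since the negative contribution $p e_l$ might otherwise dominate $(1-p)/(2l)$. I would handle this by splitting off a bounded initial block $\prod_{l=i}^{l_0-1} C(l,p,m)$ and verifying positivity directly from the explicit expressions for $e_l$ and $c_l$ (e.g.\ one checks $l(2ml+1-2m) \geq pm$ for $l \geq 2$), which reduces matters to a finite verification plus the asymptotic analysis on $[l_0, t-1]$ described above.
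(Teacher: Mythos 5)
Your argument is correct, and it shares the paper's skeleton: both proofs start from the explicit formula of Theorem \ref{waif} and reduce everything to estimates on products of the factors $C(l,p,m)$, with the first summand handled identically via $\prod_{l=i}^{t-1}C(l,p,m)=O(t^{(1-p)/2})$. The difference lies in the second summand. The paper discards the $l$-dependence of the partial products: after choosing $l_0$ so that $C(h,p,m)\ge 1$ for $h\ge l_0$, it bounds every $\prod_{h=l+1}^{t-1}C(h,p,m)$ by the single quantity $\prod_{h=l_0+1}^{t-1}C(h,p,m)=O(t^{(1-p)/2})$ and then pays for this with the harmonic sum $\sum_{l} c_l=O(\ln t)$, which is precisely where the $\ln t^p$ in the statement comes from. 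You instead retain the uniform-in-$l$ estimate $\prod_{h=l+1}^{t-1}C(h,p,m)=O\bigl((t/l)^{(1-p)/2}\bigr)$, so the sum becomes $O\bigl(t^{(1-p)/2}\sum_{l}l^{-(3-p)/2}\bigr)$ with a convergent series for $p\in(0,1)$, yielding the strictly stronger conclusion $\mathbb{E}[d(v_i,t)]=O(t^{(1-p)/2})$ with no logarithmic factor (the implied constant of course degenerates as $p\to 1$, consistently with the logarithmic growth of Remark \ref{loo}). Your positivity concern is real but easily dispatched: for $l\ge 2$ one has $e_l\le e_2=m/(4m+2)<1/4$, so $C(l,p,m)>3/4$ on the entire range relevant to $i\ge 2$ and no initial block actually needs special treatment. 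In short, your route is a sharpening of the paper's, and it shows that the $\ln t^p$ factor in the stated bound is an artifact of the cruder uniform bound on the partial products rather than a genuine feature of the growth rate for $p\in(0,1)$.
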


		\subsection{Almost sure convergence of the degree process}\label{iniz4}
			
			Next we discuss the almost sure convergence of $d(v_i,t)$ when $p\in \{0,1\}$. Concerning the case $p=0$, that is in the pure preferential attachment case, it is well known (see e.g.\ \cite{VDH}, Chapter 8) that $t^{-1/2}d(v_i,t)\overset{a.s.}{\longrightarrow}\eta_i$,
			where, for each given $i$, $\eta_i$ is an almost surely positive random variable with finite mean. In other terms, the degree of any given vertex $v_i$ grows as $\sqrt{t}$ (i.e.\ $d(v_i,t)=O(\sqrt{t})$ as $t\rightarrow \infty$).
			If $p=1$, that is for the pure anti-preferential case we have the following results.
			
			First we show that the degree of each vertex grows much slower than $\ln^s t$ for every $s>1$ (i.e.\ $d(v_i,t)=o\left(\ln^s t\right)$ as $t\rightarrow \infty$), and hence much slower than in the usual preferential attachment setting.

			\begin{thm}
				\label{elvira}
				Let $p=1$ (anti-preferential case) and let $m,i\in \mathbb{N}^*$. Then, for every fixed $s>1$,
				\begin{align*}
					\lim_{t \rightarrow \infty}\frac{d(v_i,t)}{\left(\sum_{l=i}^{t-1}\ln\left(1+c_l\right)\right)^s}=0, \qquad \text{a.s.}
				\end{align*}
			\end{thm}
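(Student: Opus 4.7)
The plan is to reduce the almost sure statement to a Borel--Cantelli tail estimate and to obtain the latter via an exponential moment bound. Since $\sum_{l=i}^{t-1}\ln(1+c_l)=\Theta(\ln t)$ by Remark \ref{loo}, it suffices to show $d(v_i,t)/(\ln t)^s\to 0$ almost surely for every $s>1$. Because $t\mapsto d(v_i,t)$ is nondecreasing, it is enough to prove that for every $\varepsilon>0$,
\begin{equation*}
\sum_{t\geq i}\mathbb{P}\bigl(d(v_i,t)\geq \varepsilon(\ln t)^s\bigr)<\infty;
\end{equation*}
Borel--Cantelli together with a countable sequence $\varepsilon_n\downarrow 0$ then yields the claim.

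The heart of the argument is controlling the moment generating function of $d(v_i,t)$. Conditionally on $\mathcal{F}_t$, the increment $\Delta_t=d(v_i,t+1)-d(v_i,t)$ is a sum of $m$ independent Bernoulli trials with success probability $p_{i,t}=(2mt+1-d(v_i,t))/(t(2mt+1-2m))$, which is bounded above by $c_t$ almost surely since $d(v_i,t)\geq 0$. Using $(1-p+pe^\lambda)^m\leq \exp\bigl(mp(e^\lambda-1)\bigr)$, we find, for every $\lambda>0$,
\begin{equation*}
\mathbb{E}\bigl[e^{\lambda d(v_i,t+1)}\bigm|\mathcal{F}_t\bigr]\leq e^{\lambda d(v_i,t)}\exp\bigl(mc_t(e^\lambda-1)\bigr).
\end{equation*}
Taking expectations and iterating from $t=i$ (where $d(v_i,i)=m$), and using $\sum_{l=i}^{t-1}c_l=\Theta(\ln t)$, we obtain
\begin{equation*}
\mathbb{E}\bigl[e^{\lambda d(v_i,t)}\bigr]\leq e^{\lambda m}\exp\Bigl((e^\lambda-1)m\sum_{l=i}^{t-1}c_l\Bigr)=O\bigl(t^{K(\lambda)}\bigr)
\end{equation*}
for some $K(\lambda)>0$. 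Markov's inequality then yields $\mathbb{P}(d(v_i,t)\geq \varepsilon(\ln t)^s)\leq C\,t^{K(\lambda)}\exp(-\lambda\varepsilon(\ln t)^s)$, and since $s>1$ the factor $\exp(-\lambda\varepsilon(\ln t)^s)$ decays faster than any negative power of $t$, making the series summable.

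No step is genuinely hard, but the main subtlety is the choice of tail bound: polynomial moments $\mathbb{E}[d(v_i,t)^q]=O((\ln t)^q)$ give only $\mathbb{P}(d(v_i,t)\geq \varepsilon(\ln t)^s)=O((\ln t)^{-q(s-1)})$, which is not summable in $t$ for any $q$, and an Azuma--Hoeffding martingale bound would only yield Gaussian fluctuations on scale $\sqrt{t}$, much too crude. Exponential moments are the correct tool because conditionally the increments of $d(v_i,t)$ behave like a Poisson process of total rate $O(\ln t)$, so the degree is concentrated on the scale $\ln t$, precisely one logarithmic factor short of the threshold $(\ln t)^s$ appearing in the statement.
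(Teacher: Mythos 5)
Your proof is correct, but it takes a genuinely different route from the paper's. The paper builds the non-negative process $S_{i,s}(t)=d(v_i,t)\big/\big(\sum_{l=i}^{t-1}\ln(1+c_l)\big)^s+m\sum_{k\geq t}c_k\big/\big(\sum_{l=i}^{k-1}\ln(1+c_l)\big)^s$, checks (via the same binomial-increment computation you use, plus the Cauchy condensation test to ensure the compensating series converges when $s>1$) that it is a supermartingale, invokes the supermartingale convergence theorem to obtain an a.s.\ limit $X_{i,s}$, and then forces $X_{i,s}=0$ by comparing two exponents $1<s_0<s$. That argument is soft: no tail bound ever appears, and the hypothesis $s>1$ enters only to make the compensator summable. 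Your argument instead quantifies the concentration: since the conditional attachment probability is dominated by $c_t$, the increments are stochastically dominated by independent $\mathrm{Bin}(m,c_l)$ variables of total mean $O(\ln t)$, giving $\mathbb{E}[e^{\lambda d(v_i,t)}]=O(t^{K(\lambda)})$, and a Chernoff plus Borel--Cantelli step finishes; here $s>1$ enters through $\exp(-\lambda\varepsilon(\ln t)^s)=t^{-\lambda\varepsilon(\ln t)^{s-1}}$ beating every polynomial. This is essentially the machinery the authors themselves deploy for Theorems \ref{lntlnt} and \ref{elvira2}, so your route unifies those results with this one and yields explicit tail estimates that the supermartingale proof does not provide. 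Two cosmetic points: for $i=1$ the initial degree is $2m$ because of the self-loops, which only changes the constant $e^{\lambda m}$ in your iteration; and the monotonicity of $t\mapsto d(v_i,t)$ is not actually needed, since Borel--Cantelli applied along all of $t$ together with a sequence $\varepsilon_n\downarrow 0$ already gives the a.s.\ limit.
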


			Then, next theorem shows that in the pure anti-preferential attachment case (i.e.\ when $p=1$) the growth of $d(v_i,t)$ for any given $i$ is controlled by $\ln t $.

			\begin{thm}
				\label{lntlnt}
				Let $m,i\in \mathbb{N}^*$. In the pure anti-preferential attachment model we have that
				\begin{align*}
					\limsup_{t\rightarrow \infty}\frac{d(v_i,t)}{\ln t}< \infty, \qquad \text{a.s.}
				\end{align*}
			\end{thm}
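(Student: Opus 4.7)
The plan is to use the Doob decomposition of the degree process combined with a Bernstein/Freedman-type martingale concentration inequality that exploits the smallness of the conditional variances. Write $d(v_i,t)=d(v_i,i)+A_t+M_t$, where $A_t=\sum_{s=i}^{t-1}\mathbb{E}[d(v_i,s+1)-d(v_i,s)\mid \mathcal{F}_s]$ is the predictable compensator and $M_t$ the associated zero-mean martingale.

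Given $\mathcal{F}_s$, the increment $d(v_i,s+1)-d(v_i,s)$ is binomial with parameters $m$ and $p_s:=(2ms+1-d(v_i,s))/(s(2ms+1-2m))$, so $\mathbb{E}[d(v_i,s+1)-d(v_i,s)\mid\mathcal{F}_s]=mp_s$ and $\mathrm{Var}(d(v_i,s+1)-d(v_i,s)\mid\mathcal{F}_s)=mp_s(1-p_s)\leq mp_s$. Since $d(v_i,s)\geq 0$ we have the deterministic bound $p_s\leq c_s$, and $\sum_{s=i}^{t-1}c_s=\Theta(\ln t)$ by an elementary estimate analogous to the one in Remark \ref{loo}. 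Hence, pathwise in $\omega$,
\begin{equation*}
0\leq A_t\leq m\sum_{s=i}^{t-1}c_s=O(\ln t),\qquad \langle M\rangle_t\leq m\sum_{s=i}^{t-1}c_s\leq K\ln t,
\end{equation*}
for some constant $K=K(m,i)$ and all $t$ sufficiently large.

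Combining $|M_{s+1}-M_s|\leq m$ with the pathwise bound on $\langle M\rangle_t$, Freedman's inequality yields, for any $C>0$ and all sufficiently large $t$,
\begin{equation*}
\mathbb{P}(|M_t|\geq C\ln t)\leq 2\exp\!\left(-\frac{C^2\ln t}{2(K+mC)}\right)=2\,t^{-C^2/(2(K+mC))}.
\end{equation*}
Choosing $C$ large enough that the exponent exceeds $1$ makes the right-hand side summable in $t$, so Borel--Cantelli gives $|M_t|\leq C\ln t$ for all $t$ large, almost surely. Combined with the pathwise bound on $A_t$, this yields $d(v_i,t)=O(\ln t)$ a.s., which is the claim.

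The main obstacle is obtaining an almost sure bound of order $\ln t$ on $M_t$: Azuma--Hoeffding would only deliver the much weaker $O(\sqrt{t\ln t})$, so a variance-sensitive concentration inequality is required. The crucial observation enabling Freedman's inequality is that $p_s$ admits the deterministic upper bound $c_s$, which transfers the $O(\ln t)$ control of $\mathbb{E}\langle M\rangle_t$ to a pathwise statement uniform in $\omega$, as required by Freedman's hypothesis.
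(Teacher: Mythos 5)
Your proof is correct, but it follows a genuinely different route from the paper's. The paper works with the exponential supermartingale $W_i(t)=\exp(d(v_i,t))/\prod_{l=i}^{t-1}(1+(e-1)c_l)^m$: the conditional binomial moment generating function gives $\mathbb{E}[e^{d(v_i,t+1)}\mid\mathcal{F}_t]\le e^{d(v_i,t)}(1+(e-1)c_t)^m$, so $W_i(t)$ is a positive supermartingale converging a.s.\ to a finite limit, and the elementary bound $\ln x\le x-1$ then yields $d(v_i,t)\le W_i(t)-1+m\sum_{l=i}^{t-1}\ln(1+(e-1)c_l)=O(\ln t)$ almost surely. You instead use the Doob decomposition together with Freedman's inequality and Borel--Cantelli. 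Both arguments hinge on the same structural fact, namely the deterministic bound $p_s\le c_s$ with $\sum_{s=i}^{t-1}c_s=\Theta(\ln t)$: in the paper it controls the normalizing product, while in your argument it controls both the compensator $A_t$ and the predictable bracket $\langle M\rangle_t$ pathwise, which is exactly what makes Freedman applicable without any conditioning on a good event. The paper's route is shorter and needs only supermartingale convergence; yours is more quantitative, since Freedman delivers explicit polynomial tail bounds $\mathbb{P}(|M_t|\ge C\ln t)\le 2t^{-\alpha(C)}$ with $\alpha(C)\to\infty$ as $C\to\infty$, and hence, as a by-product, high-probability statements in the spirit of Theorem \ref{elvira2}. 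The intermediate steps you state all check out: $|M_{s+1}-M_s|\le m$, the conditional variance bound $mp_s(1-p_s)\le mc_s$, and the choice of $C$ making the exponent exceed $1$ so that the probabilities are summable.
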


			Lastly, to gain more insight on the rate of growth of the degree process we
			state the following theorem.
			\begin{thm}
				\label{elvira2}
				Let $p=1$, $i > 1$, and $\gamma(t)$ be a positive function such that $\gamma(t) \to \infty$ as $t \to \infty$ and $\sum_{j=1}^t(j \ln j)^{-1}/\gamma(t) \to 0$.
				Then $d(v_i,t)\le \kappa \, \gamma(t) \ln t$ with high probability as $t \to \infty$, where $\kappa>1$ is a constant.
			\end{thm}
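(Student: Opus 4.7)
The plan is to combine Markov-type inequalities with the fact, provided by Theorem \ref{sqrtno} together with Remark \ref{loo}, that in the pure anti-preferential regime one has $\mathbb{E}[d(v_i,t)] = \Theta(\ln t)$. The basic observation is that, for $p=1$, conditional on $\mathcal{F}_s$ the increment $\Delta_s := d(v_i,s+1) - d(v_i,s)$ is $\mathrm{Bin}(m,q_s)$ with $q_s = (2ms+1-d(v_i,s))/(s(2ms+1-2m))$, and the deterministic upper bound $q_s \le c_s$ holds because $d(v_i,s)\ge m$ for every $s\ge i$.

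The most direct completion is a one-line application of Markov's inequality: $\mathbb{P}(d(v_i,t)\ge \kappa\gamma(t)\ln t) \le \mathbb{E}[d(v_i,t)]/(\kappa\gamma(t)\ln t) = O(1/\gamma(t))$, which tends to $0$ under $\gamma(t)\to \infty$. A more refined route is an exponential Chernoff bound. Using $(1+q(e^\lambda-1))^m\le \exp(mq(e^\lambda-1))$ pointwise together with $q_s\le c_s$ and iterating the tower property, one gets
\[
\mathbb{E}\bigl[e^{\lambda d(v_i,t)}\bigr]\le \exp\!\left(\lambda m + m(e^\lambda-1)\sum_{s=i}^{t-1}c_s\right),
\]
and since $\sum_{s=i}^{t-1}c_s = \ln t + O(1)$ (cf.\ Remark \ref{loo}), the choice $\lambda = \ln \kappa$ followed by Markov's inequality yields
\[
\mathbb{P}\bigl(d(v_i,t)\ge \kappa\gamma(t)\ln t\bigr)\le \exp\!\Bigl(-\ln t\,\bigl[\kappa\gamma(t)\ln\kappa - m(\kappa-1)\bigr] + O(1)\Bigr),
\]
which vanishes as $t\to\infty$ whenever $\gamma(t)\to \infty$.

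The main obstacle is understanding the precise role of the stronger hypothesis $\sum_{j=1}^t(j\ln j)^{-1}/\gamma(t)\to 0$, equivalent to $\gamma(t)\gg \ln\ln t$. Both the Markov and Chernoff routes above require only $\gamma(t)\to\infty$, so this condition seems to encode a more quantitative statement than the mere convergence in probability claimed. I expect the authors' proof to proceed via a union bound over a sequence of intermediate times at which, for time $s$, a Chernoff-type tail of order $1/(s\ln s)$ is extracted by choosing $\lambda = \lambda(s)$ appropriately; summing these tails naturally produces the quantity $\sum_{j=1}^t(j\ln j)^{-1}/\gamma(t)$, whose vanishing is then exactly what is needed for the cumulative failure probability to tend to $0$. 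Matching this refined scheme with the correct choice of $\lambda(s)$, and pinning down the consequent constant $\kappa > 1$ appearing in the statement, will be the technical heart of the argument.
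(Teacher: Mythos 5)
Both of your routes are correct and each one already proves the statement (indeed the Chernoff route proves something stronger), but they are genuinely different from the paper's argument, and your closing guess about where the hypothesis $\sum_{j\le t}(j\ln j)^{-1}/\gamma(t)\to 0$ enters is not what the paper does. There is no union bound over intermediate times. The paper applies the exponential Markov inequality to $\exp\bigl(d(v_i,t)/\ln t\bigr)$, but it bounds this moment generating function by iterating the \emph{time-varying} quantity $\exp\bigl(d(v_i,s)/\ln s\bigr)$: the one-step factor at time $j$ is controlled by $\exp\bigl(mc_j(e^{1/\ln(j+1)}-1)\bigr)\le\exp\bigl(2mc_j/\ln(j+1)\bigr)$, so the accumulated cumulant is $\Theta\bigl(\sum_j (j\ln j)^{-1}\bigr)=\Theta(\ln\ln t)$ rather than $O(1)$, and the final exponent is $-\gamma(t)+O(\ln\ln t)$. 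The hypothesis $\gamma(t)\gg\ln\ln t$ is exactly what sends this to $-\infty$; it is an artifact of tilting by $1/\ln s$ at step $s$ instead of by $1/\ln t$ throughout. Your fixed-tilt bound avoids that loss: with $\lambda$ constant the cumulant is $m(e^{\lambda}-1)\sum_{s=i}^{t-1}c_s=m(e^{\lambda}-1)(\ln t+O(1))$, which is absorbed by the main term $-\lambda\kappa\gamma(t)\ln t$ as soon as $\gamma(t)\to\infty$, yielding the tail $\exp\bigl(-\Omega(\gamma(t)\ln t)\bigr)$ versus the paper's $\exp\bigl(-\gamma(t)(1+o(1))\bigr)$; and even your one-line Markov bound $O(1/\gamma(t))$ suffices for the literal ``with high probability'' claim, using only $\mathbb{E}[d(v_i,t)]=\Theta(\ln t)$ from Theorem \ref{sqrtno} and Remark \ref{loo}. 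The only inaccuracy in your write-up is the final speculation: there is no sequence of intermediate times and no $\lambda(s)$ to optimize, so the ``technical heart'' you anticipate does not exist --- your second displayed estimate is already a complete proof, under the weaker hypothesis $\gamma(t)\to\infty$.
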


			\begin{oss}
				Notice that when $\gamma(t)$ grows faster than $\ln^\alpha t$, $\alpha > 0$, Theorem \ref{elvira2} is weaker than
				Theorem \ref{elvira}. As an example of application in the non-trivial case one might consider $\gamma(t) = (\ln \ln t)^\beta$, $\beta\ge 2$.
			\end{oss}
			
	\section{A variation of the PA-APA model}\label{variation}

		\begin{figure}
				\begin{subfigure}{.23\textwidth}
					\includegraphics[scale=0.22]{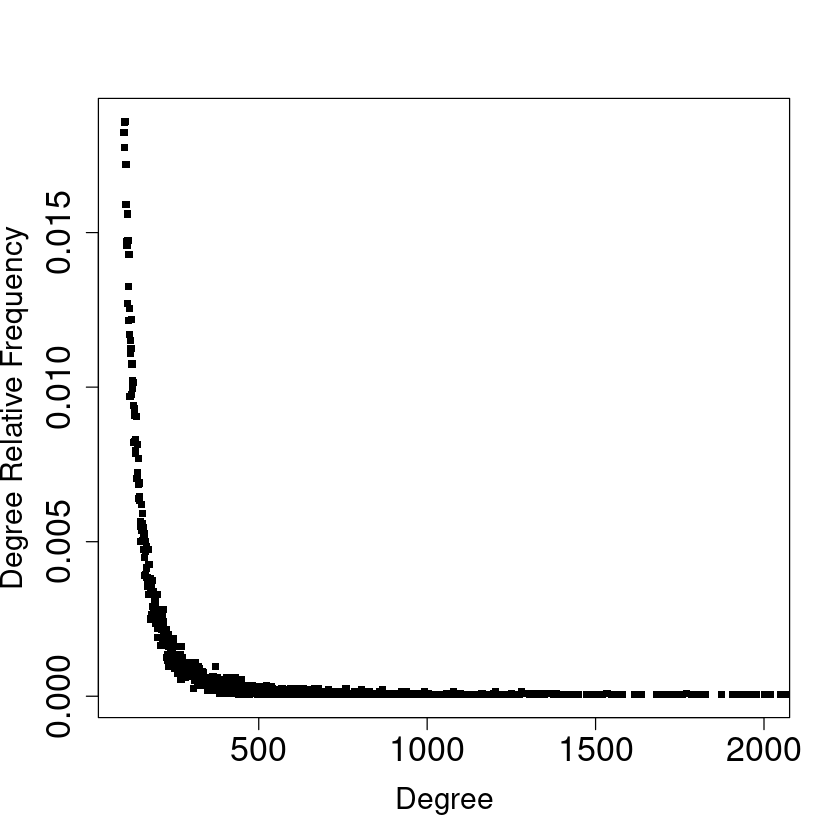}
					\subcaption{$p=0$}
				\end{subfigure}
				\begin{subfigure}{.23\textwidth}
					\includegraphics[scale=0.22]{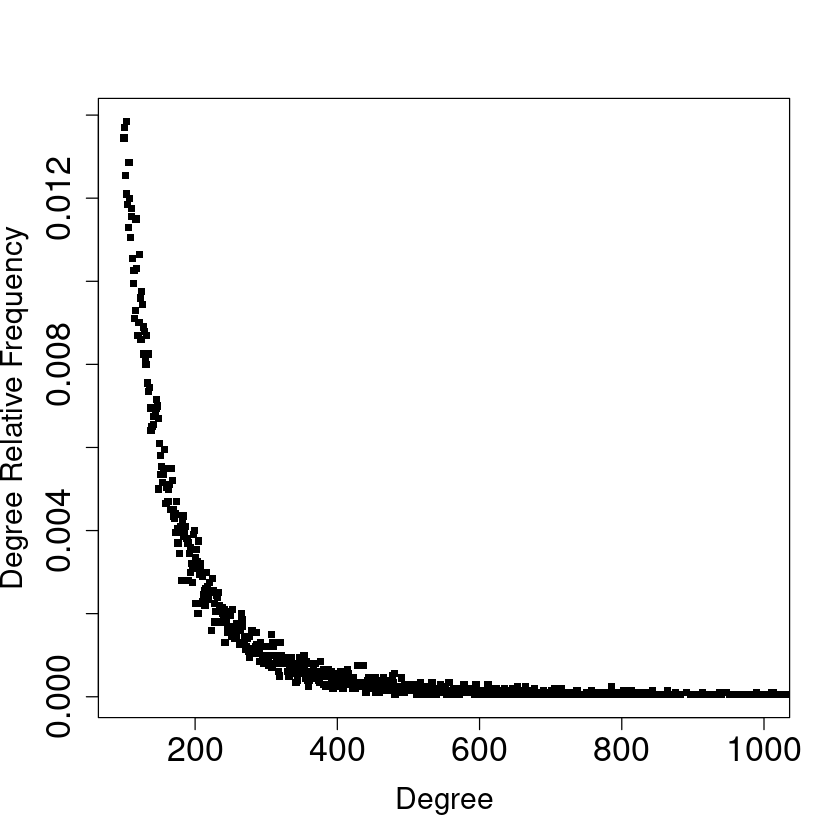}
					\subcaption{$p=0.4$}
				\end{subfigure}
				\newline
				\begin{subfigure}{.23\textwidth}
					\includegraphics[scale=0.22]{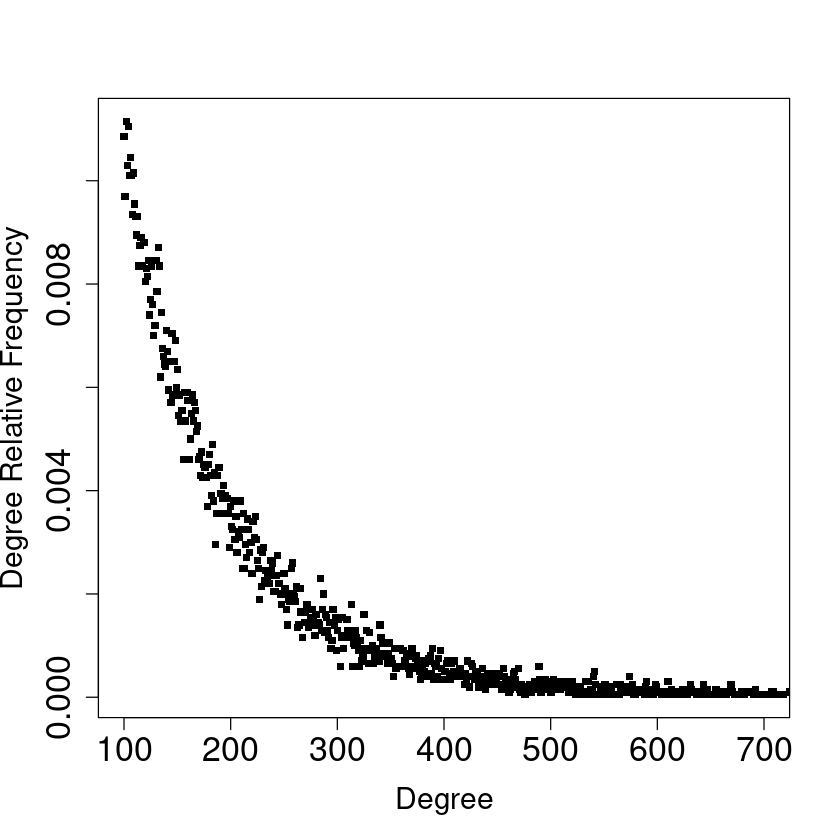}
					\subcaption{$p=0.6$}
				\end{subfigure}
				\begin{subfigure}{.23\textwidth}
					\includegraphics[scale=0.22]{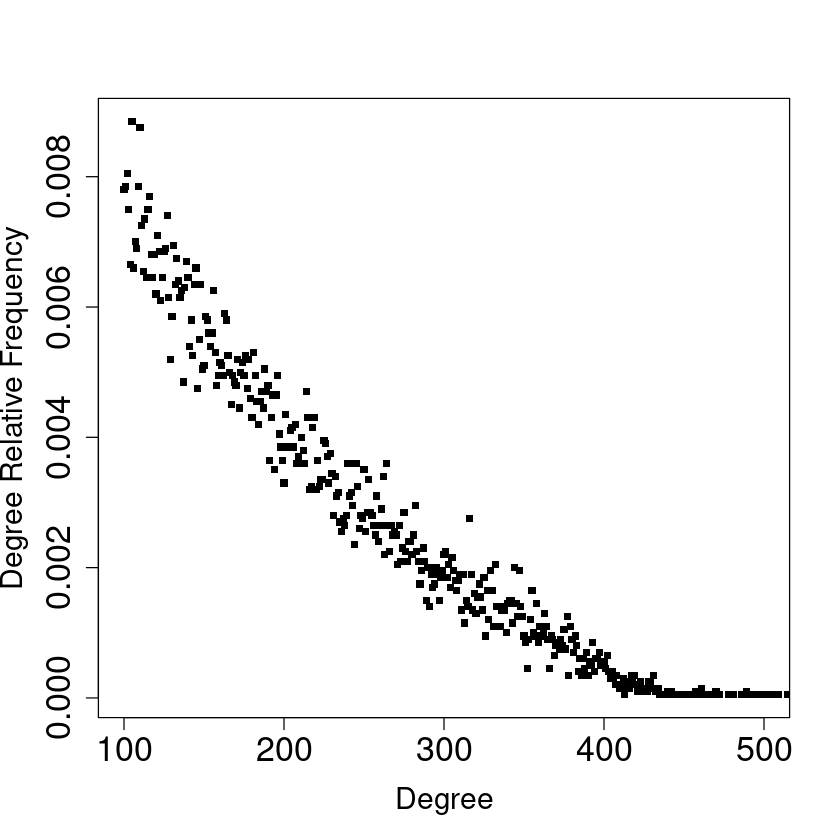}
					\subcaption{$p=0.8$}
				\end{subfigure}
				\newline
				\begin{subfigure}{.23\textwidth}
					\includegraphics[scale=0.22]{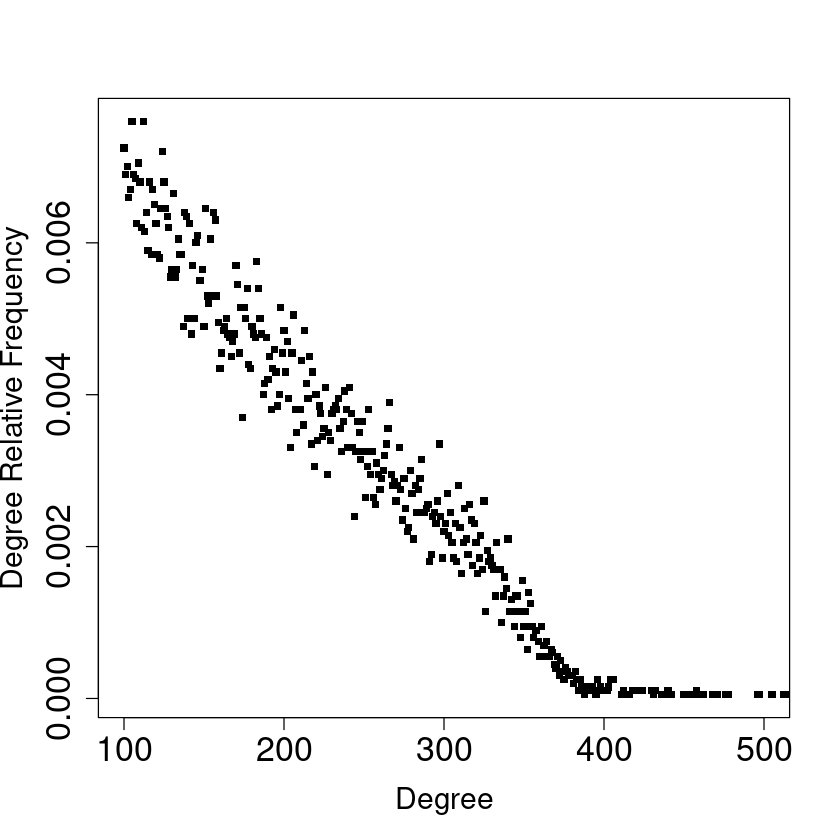}
					\subcaption{$p=0.85$}
				\end{subfigure}
				\begin{subfigure}{.23\textwidth}
					\includegraphics[scale=0.22]{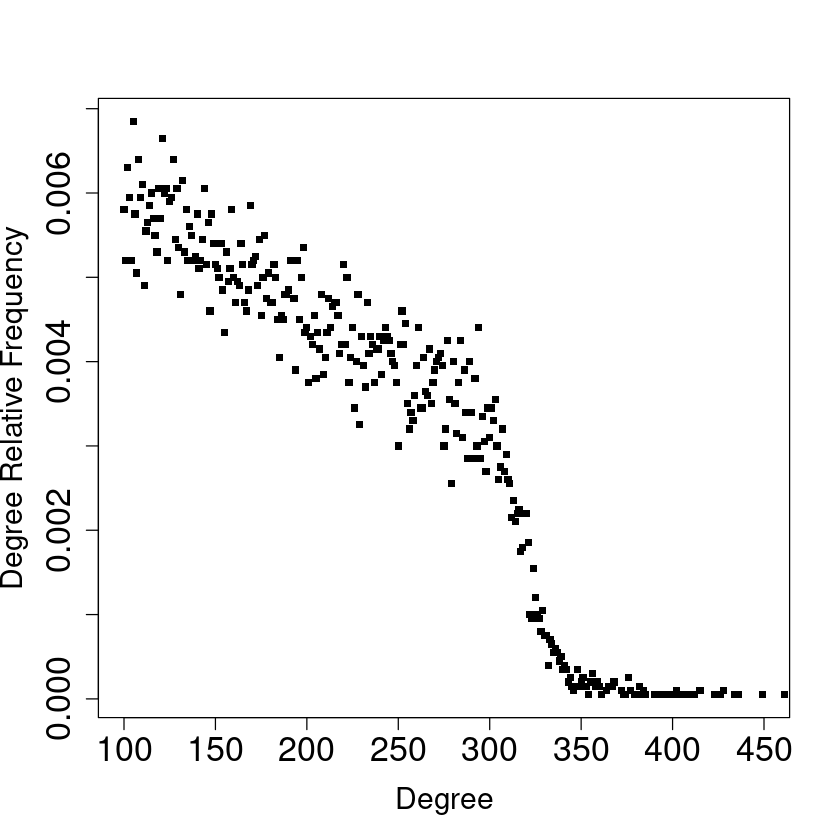}
					\subcaption{$p=0.9$}
				\end{subfigure}
				\newline
				\begin{subfigure}{.23\textwidth}
					\includegraphics[scale=0.22]{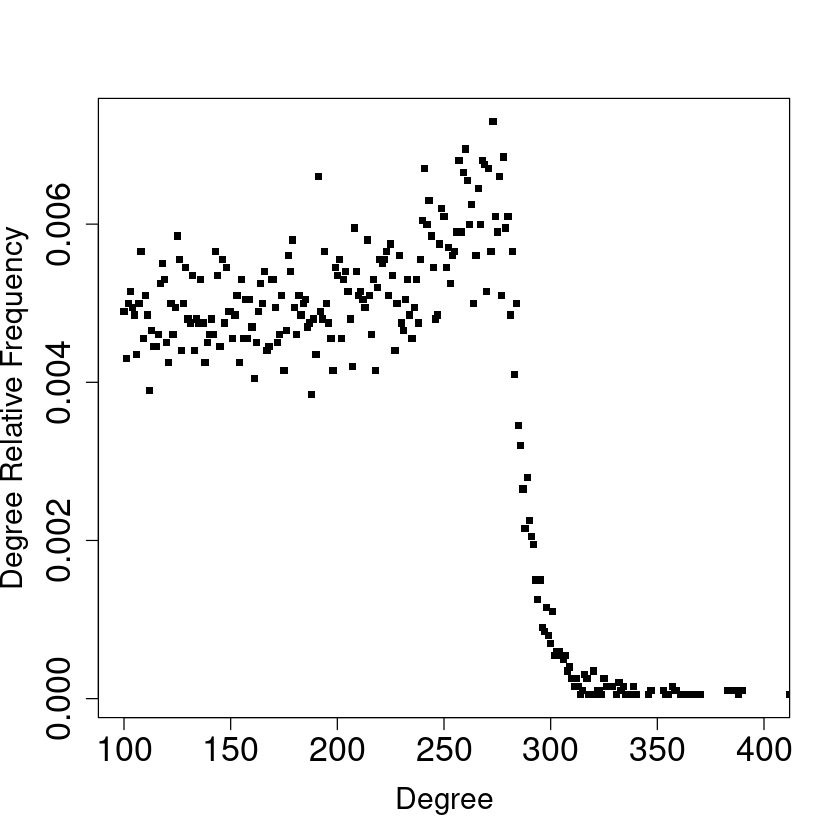}
					\subcaption{$p=0.95$}
				\end{subfigure}
				\begin{subfigure}{.23\textwidth}
					\includegraphics[scale=0.22]{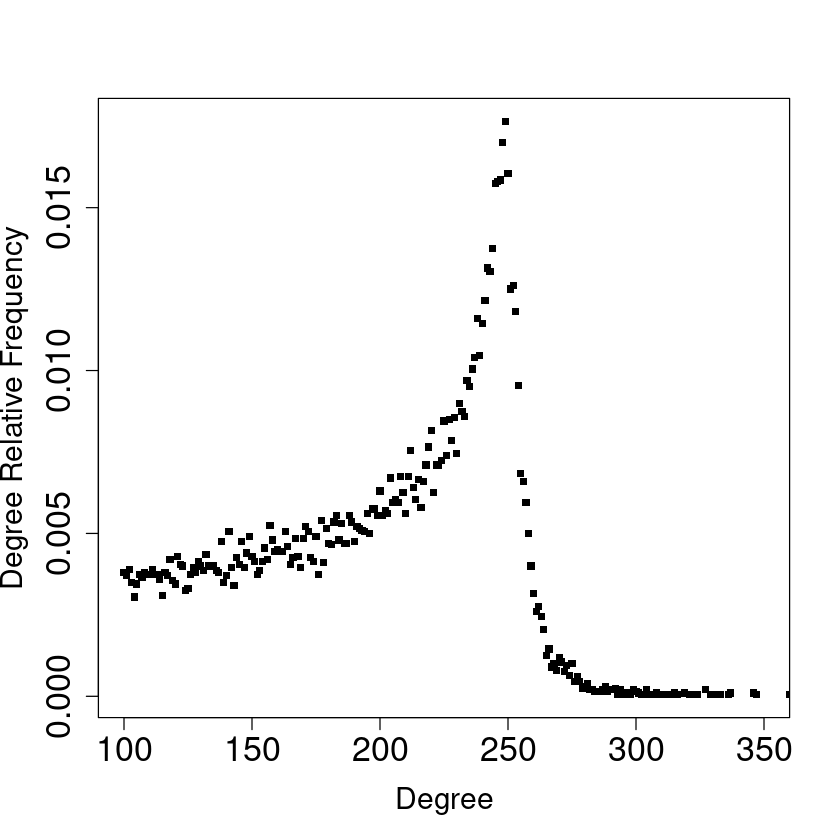}
					\subcaption{$p=1$}
				\end{subfigure}
				\caption{\label{figu}Degree distribution of the PA-APA-2 model for fixed $m$ and $t$ and different values of $p$.
					In all plots $m=100$ and $t=20000$.
					The appearance of the peak separating the two different regions is shown.}
			\end{figure}
			
			\begin{figure}
				\includegraphics[scale=0.4]{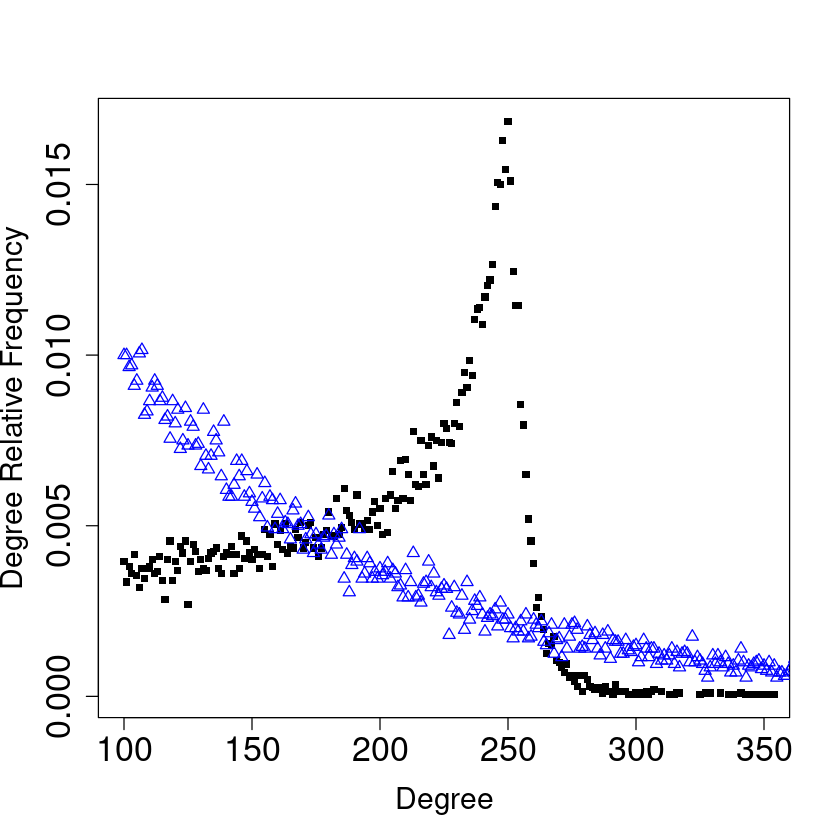}
				\caption{\label{comp}A comparison of the degree distributions of the P-APA model of Section \ref{iniz} (blue  triangles) and the PA-APA-2 model considered in Section \ref{variation} (black points). For both, $p=1$, $m=100$, $t=20000$.}
			\end{figure}

		In this section we present a simulation study of a more realistic formulation of the PA-APA model, different from that
		considered in Section \ref{iniz}, which exhibits interesting features. In the following we will call it the PA-APA-2 model. Let us define it as follows.
		Similarly to the original PA-APA model, we consider a sequence of i.i.d.\ Bernoulli random variables $(Y_{t})_{t\geq 2}$ of parameter $p \in [0,1]$ independent of the graph $(G_t)_{t\geq 1}$.
		We start the graph at time 1 with a single vertex $v_1$ with $m$ self-loops. For every $t\in \mathbb{N}^*$ we add a new vertex $v_{t+1}$ to the graph and $m$ edges between $v_{t+1}$ and the vertices already present in $G_t$.
		The $m$ vertices to which the edges have to be attached are in this case chosen with the following procedure:
		\begin{itemize}
			\item If $Y_{t+1}=0$ we select independently and with replacement $m$ random vertices $W_{t+1}^1,\dots,
				W_{t+1}^m$ from $G_t$ with probabilities
				\begin{equation*}
					\mathbb{P}(W_{t+1}^r=v_i|\mathcal{F}_t)=\frac{d(v_i,t)}{2mt}, \qquad 1\leq i\leq t,
				\end{equation*}
				for every $r\in \{1,\dots,m\}$.
			\item If $Y_{t+1}=1$ we select independently and with replacement $m$ random
				vertices $W_{t+1}^1,\dots, W_{t+1}^m$ from $G_t$ with probabilities
				\begin{equation*}
					\mathbb{P}(W_{t+1}^r=v_i|\mathcal{F}_t)=\frac{M_t+1-d(v_i,t)}{t(M_t+1-2m)}, 
				\end{equation*}
				for every $r\in \{1,\dots,m\}$, where $1\leq i\leq t$ and
				$M_t = \max_{1\leq i \leq t}d(v_i,t)$ is the maximum degree of the graph at time $t$.
		\end{itemize}
		The above probabilities differ from those in \eqref{prefa} and \eqref{aprefa} in that the anti-preferential attachment
		probabilities are calculated with respect to the current maximum degree of the graph process instead of with respect of
		the maximum theoretical degree, typically unknown to who decides the attachment rule at a certain time. This is certainly a more realistic model albeit harder to cope with.
		Here we present the results of a simulation.
		
		Figure \ref{figu} shows the actual degree distribution of the PA-APA-2 model for different values of
		the mixing parameter $p$. Notice how the degree distribution changes with $p$. The most notable effect is the appearance
		of a prominent peak separating two different regions of the distribution as $p$ increases.

		In Figure \ref{comp}, the degree distribution of the PA-APA model in the case of pure anti-preferential attachment (i.e., if $p=1$) is compared with the corresponding degree distribution (i.e. same parameters, same time $t$) of the PA-APA-2 model.
				
		In order to investigate the reasons of the presence of the peak in the degree distribution of the PA-APA-2 model we performed extensive simulations for the pure anti-preferential attachment case ($p=1$). First, we should notice (Figure \ref{dara}) that the presence of the peak is actually a transient phenomenon. To make this visible in a reasonable time we had to lower to 30 the value of the parameter $m$, which represents the number of new edges created at each time. In this case for small values of $t$ the peak is still visible while for increasing $t$ a rather slow smoothing phenomenon occurs (this can be noted in particular for $t= 1000000$). For larger values of $m$ the smoothing phenomenon still exists but it is delayed. Indeed, for instance for $m=100$, $t=1000000$, the peak is still well visible (Figure \ref{figu2}).
		
		\begin{figure}
				\begin{subfigure}{.23\textwidth}
					\includegraphics[scale=0.22]{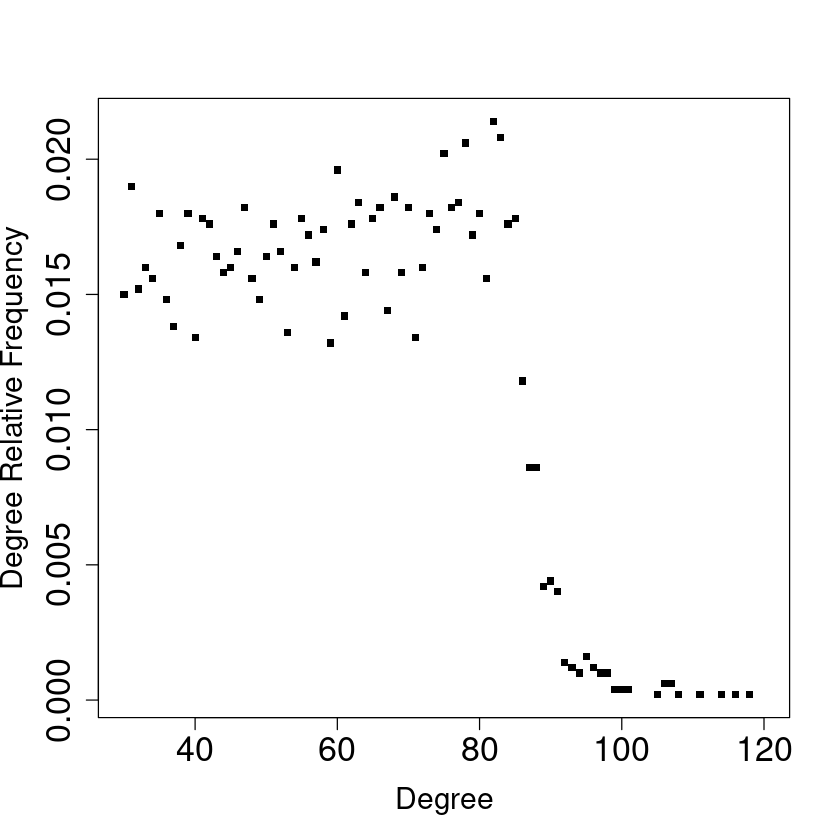}
					\subcaption{$t=5000$}
				\end{subfigure}
				\begin{subfigure}{.23\textwidth}
					\includegraphics[scale=0.22]{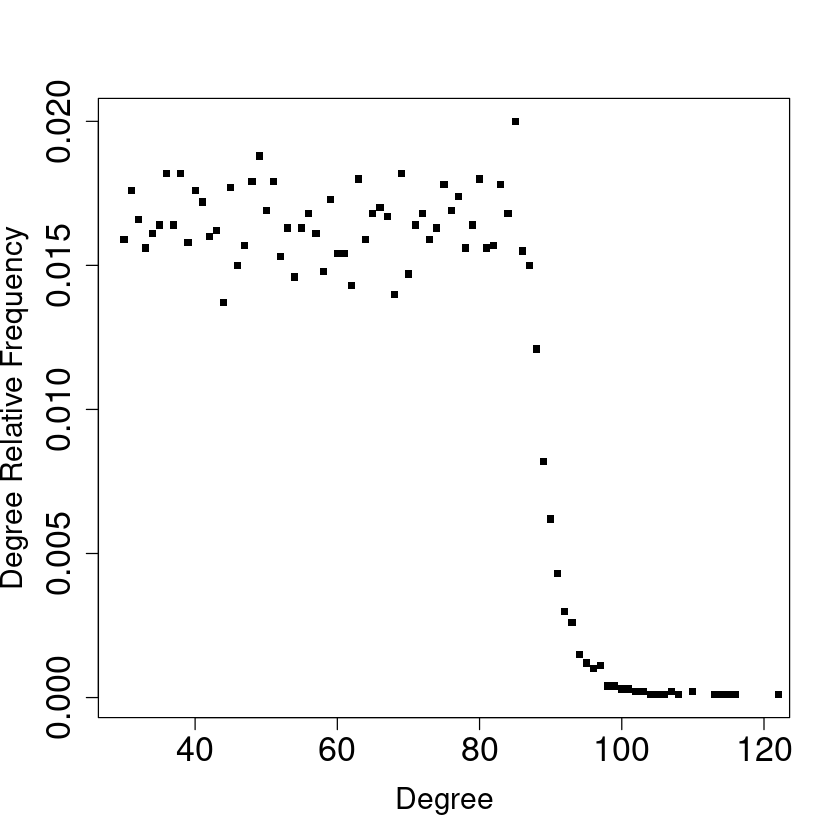}
					\subcaption{$t=10000$}
				\end{subfigure}
				\newline
				\begin{subfigure}{.23\textwidth}
					\includegraphics[scale=0.22]{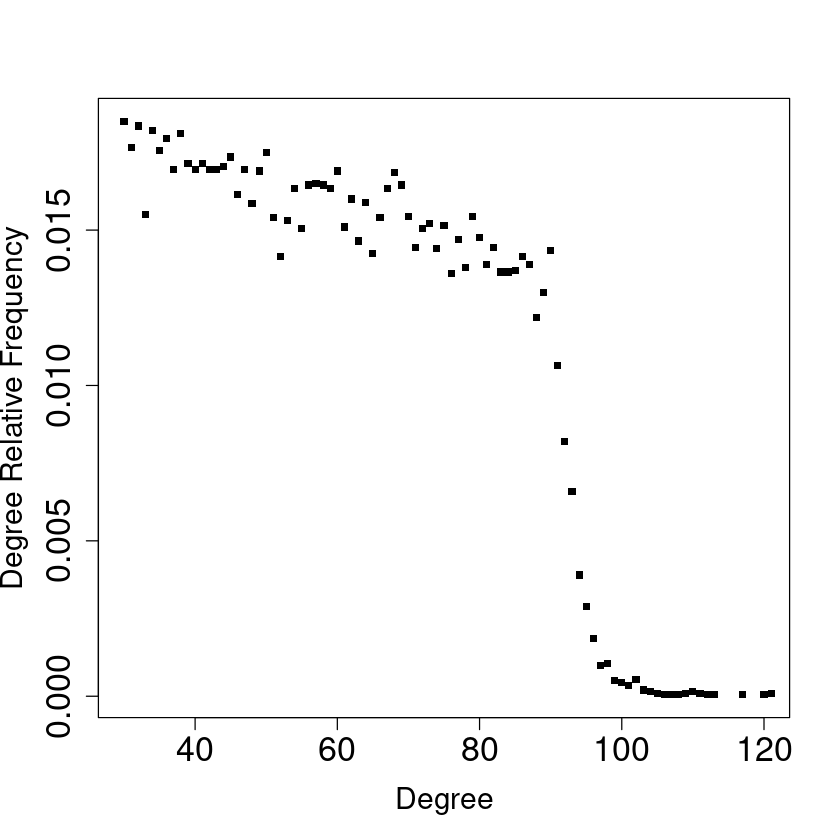}
					\subcaption{$t=20000$}
				\end{subfigure}
				\begin{subfigure}{.23\textwidth}
					\includegraphics[scale=0.22]{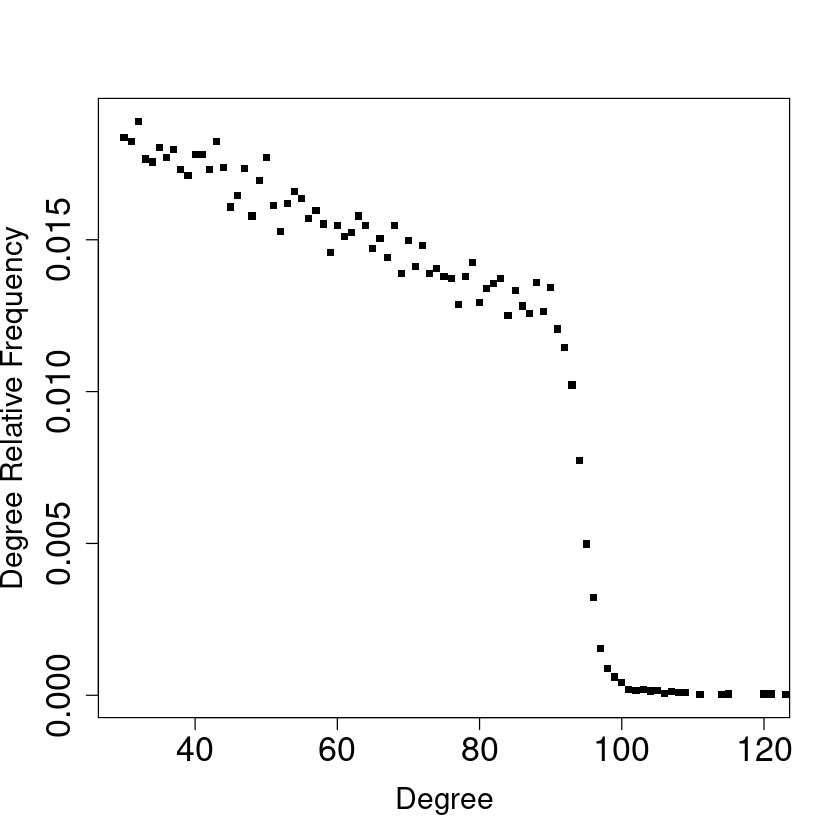}
					\subcaption{$t=50000$}
				\end{subfigure}
				\newline
				\begin{subfigure}{.23\textwidth}
					\includegraphics[scale=0.22]{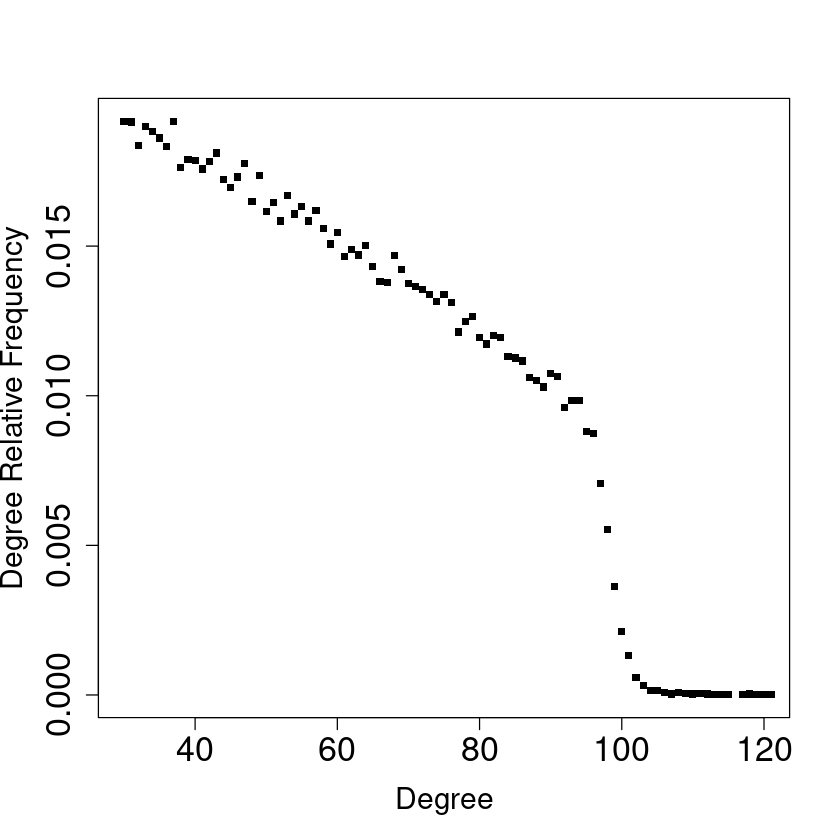}
					\subcaption{$t=100000$}
				\end{subfigure}
				\begin{subfigure}{.23\textwidth}
					\includegraphics[scale=0.22]{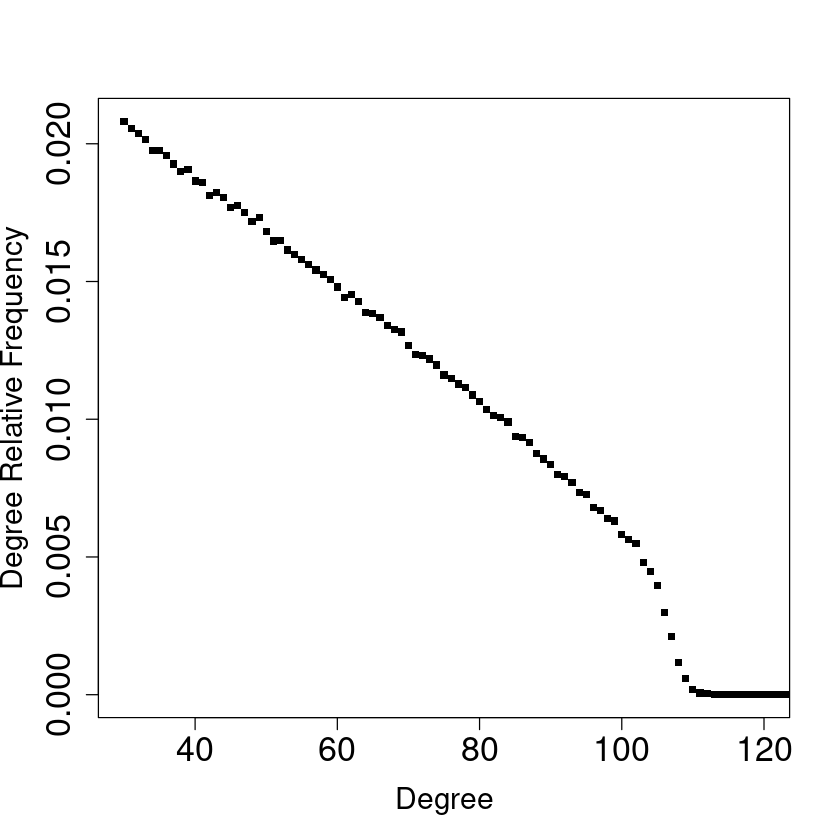}
					\subcaption{$t=1000000$}
				\end{subfigure}
				\caption{\label{dara}For increasing values of $t$, the peak present in the degree distribution of the PA-APA-2 model is smoothed out. Here we set $p=1$ and $m=30$.}
			\end{figure}
			
			The presence of the peak in the transient
			regimes of the PA-APA-2 model seems to be explained by the evolution of the related (random)
			attachment probabilities (which in turn depend on the
			evolution of the current maximum). Figure \ref{dada}
			compares the attachment probabilities of the PA-APA-2
			with those of the original PA-APA model for $p=1$, $m=30$,
			and different values of $t$. The choice of $p=1$ is justified here by the fact that
			the peak is more evident and permits a better comparison.
			Anyhow, a similar analysis can be conducted also for different values of $p$.
			The shape of the probability
			mass function is rather different in the two models. That
			of the PA-APA is more ``uniform'' leading to a spread of
			the available edges to the vertices present in the graph,
			even for small values of $t$. That of the PA-APA-2 is
			more concentrated on the recent vertices, especially
			for small values of $t$, leading thus to the appearance of the peak. For increasing values of $t$ then, the probability mass function tends to be less and less concentrated on the recent values, allowing thus a gradual smoothing.
			
			We conclude the paper by evaluating on simulated graphs the network assortativity coefficient $r$, which measures,
			as we mentioned already in the introductory section, the tendency of vertices to connect to vertices of similar degree
			and which is the Pearson correlation coefficient of the degrees at either ends of an edge selected uniformly at random (see \cite{newman2002assortative}; see also \cite{sendina2016assortativity}).
			Specifically, in the table below we simulated both the PA-APA-2 and the PA-APA
			for a given choice of the parameters ($m=100, t=20000$), for varying $p$.
			The assortativity of the graphs varies considerably from practically absence of assortativity
			($p=0$) to the pure anti-preferential attachment case for PA-APA-2 in which it has value 0.664. It seems
			reasonable that the slightly higher level of assortativity
			for the PA-APA-2 can be associated to the presence of the peak
			in the degree distribution. 
			
			\begin{table}[h]
				\centering
				\begin{tabular}{ c c c }
					 $p$ & PA-APA-2 & PA-APA \\ 
					 \hline \\
					 0 & 0.059 & 0.058 \\  
					 0.2 & 0.152 & 0.140 \\  
					 0.4 & 0.261 & 0.217 \\  
					 0.6 & 0.384 & 0.294 \\  
					 0.8 & 0.522 & 0.363 \\  
					 1 & 0.664 & 0.421    
				\end{tabular}
				\caption{Network assortativity coefficient $r$ as in \cite{newman2002assortative}, formula (3), calculated on simulated graphs with
				different values of $p$, when $m=100$, $t=20000$.}
			\end{table}

			\begin{figure}
				\includegraphics[scale=.21]{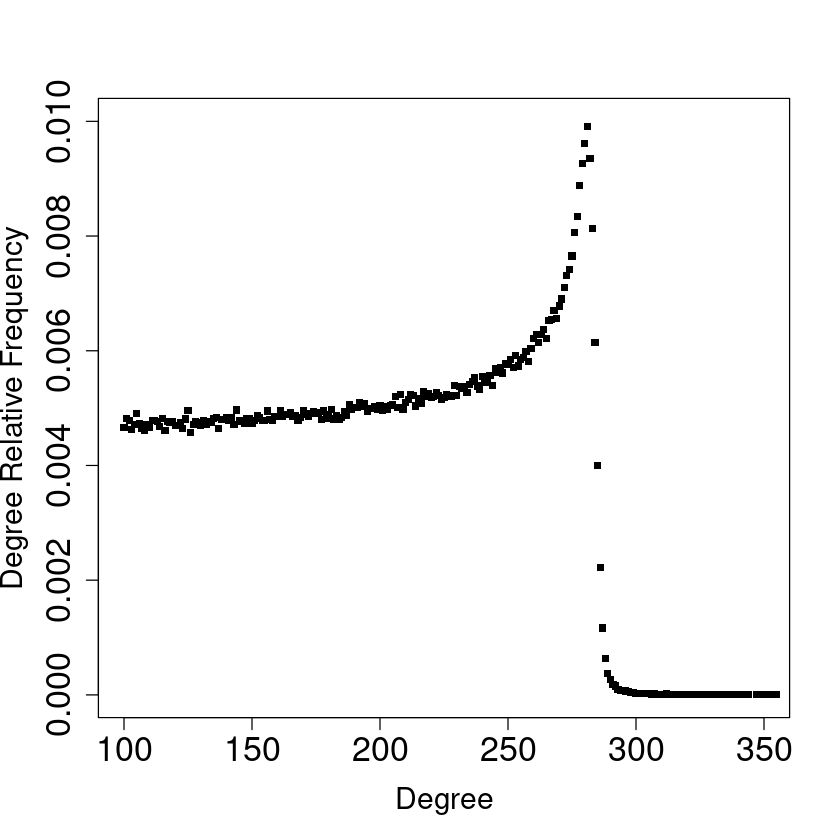}
				\includegraphics[scale=.21]{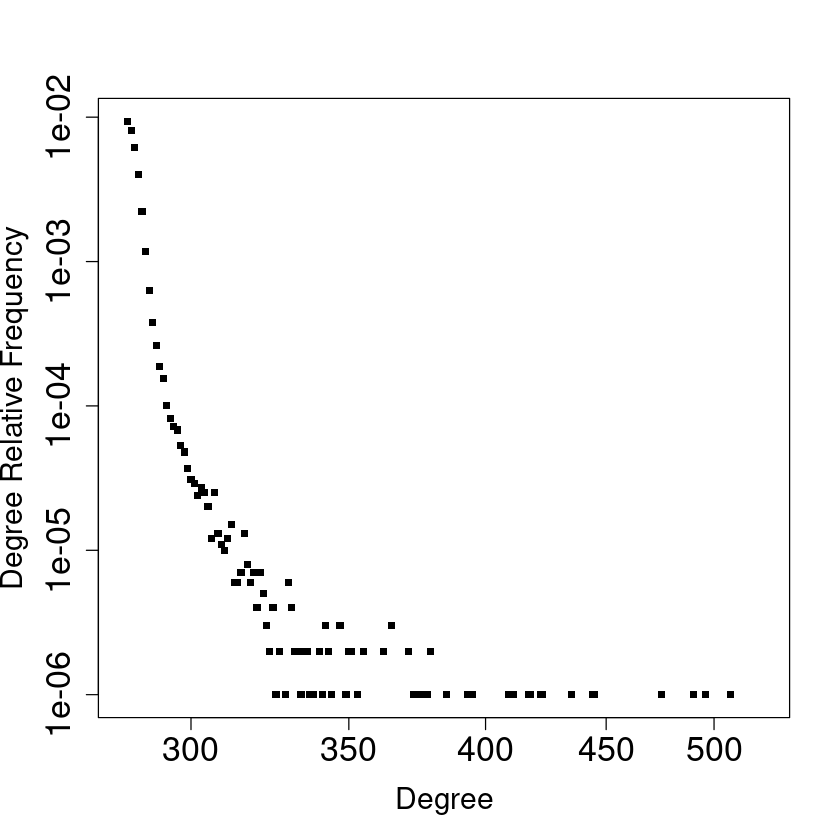}
				\caption{\label{figu2}Degree distribution of the PA-APA-2 for $p=1$ with $m=100$, $t=1000000$. In the left
					plot it is shown the complete distribution in linear scale. On the right it is shown instead a log-log
					plot restricted to the region with degree larger than the degree with maximum probability.}
			\end{figure}		
		
					\begin{figure}
				\begin{subfigure}{.23\textwidth}
					\includegraphics[scale=0.22]{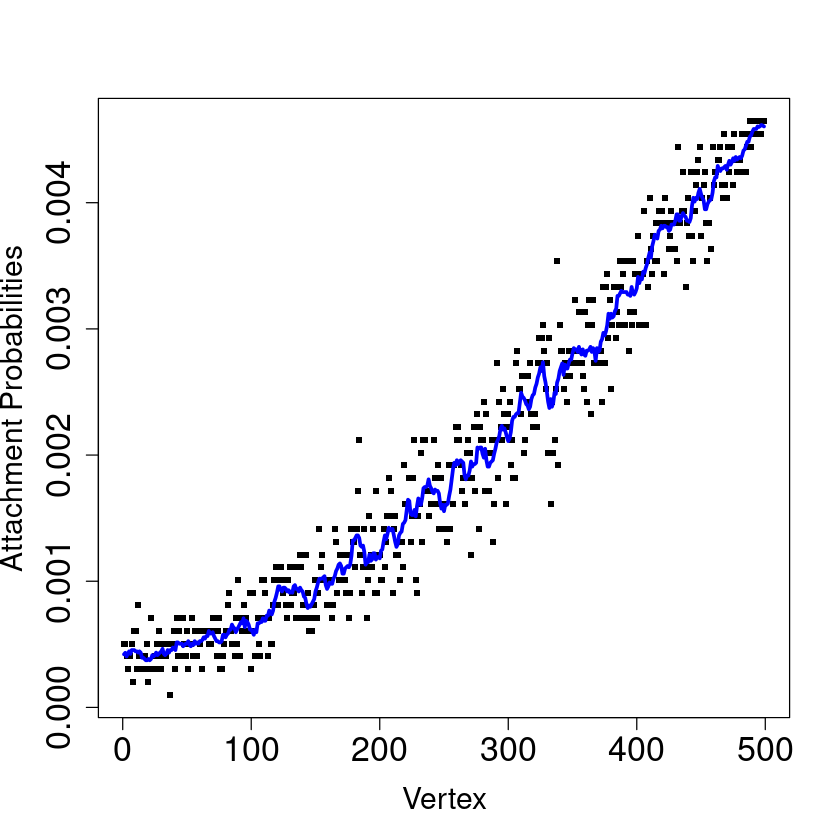}
					\subcaption{PA-APA-2, $t=500$}
				\end{subfigure}
				\begin{subfigure}{.23\textwidth}
					\includegraphics[scale=0.22]{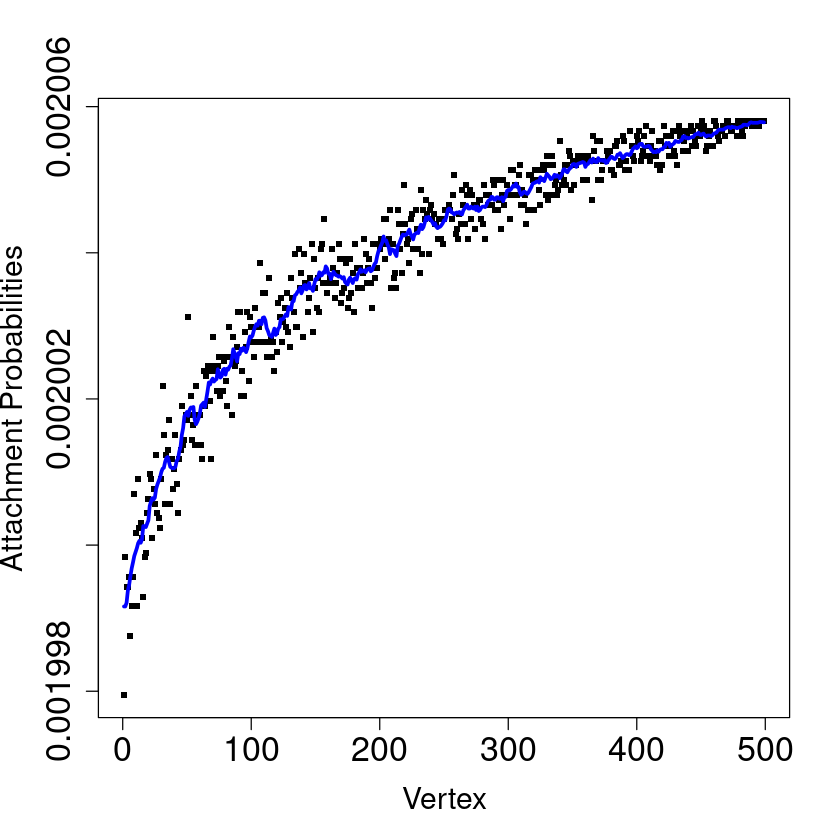}
					\subcaption{PA-APA, $t=500$}
				\end{subfigure}
				\newline
				\begin{subfigure}{.23\textwidth}
					\includegraphics[scale=0.22]{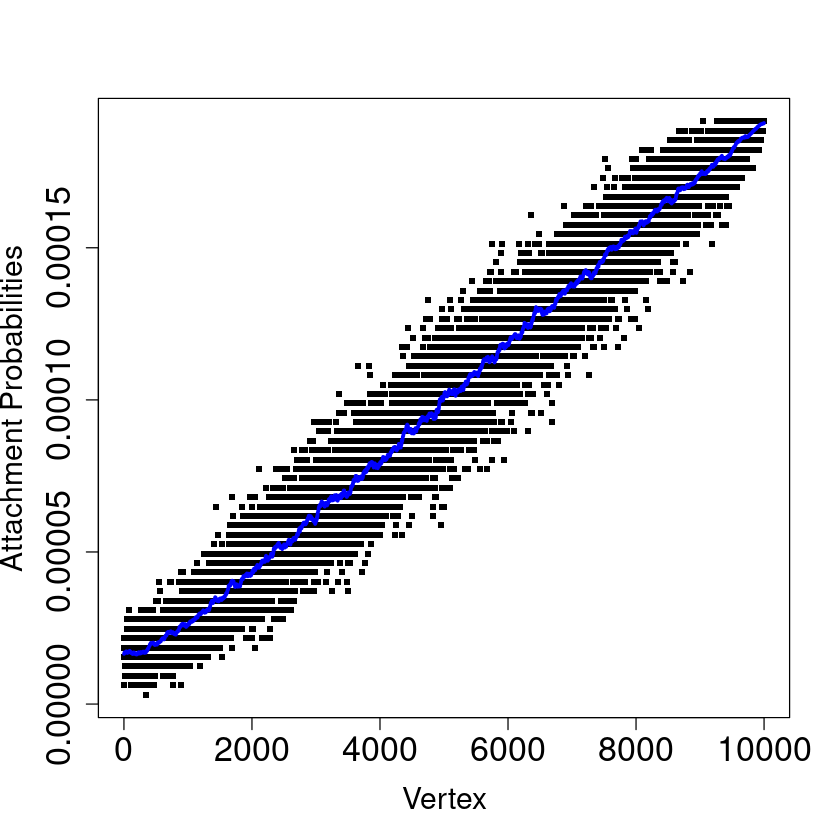}
					\subcaption{PA-APA-2, $t=10000$}
				\end{subfigure}
				\begin{subfigure}{.23\textwidth}
					\includegraphics[scale=0.22]{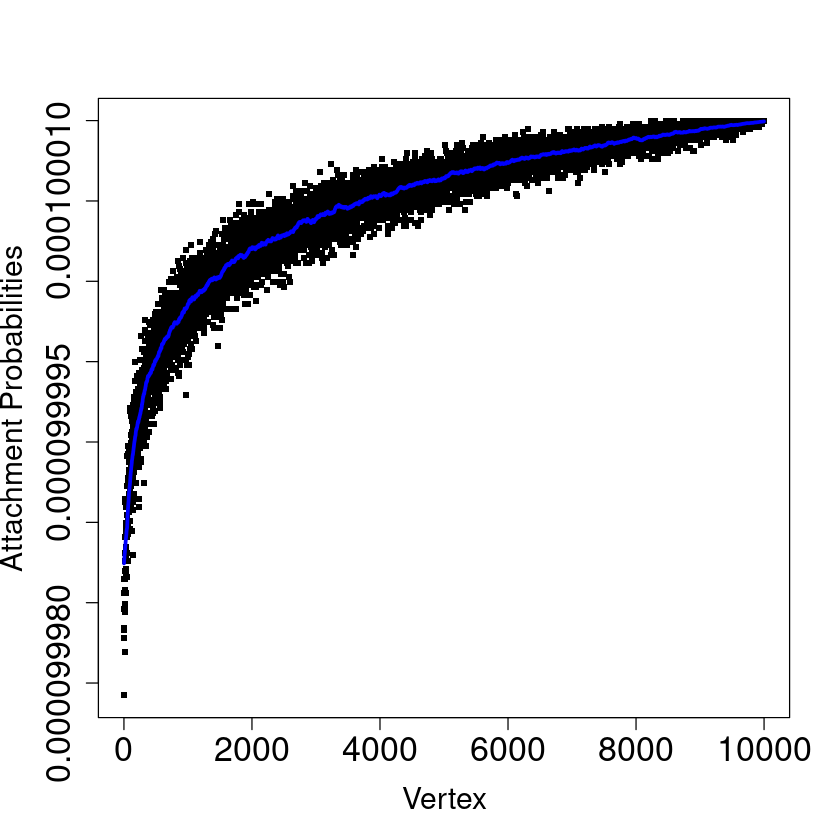}
					\subcaption{PA-APA, $t=10000$}
				\end{subfigure}
				\newline
				\begin{subfigure}{.23\textwidth}
					\includegraphics[scale=0.22]{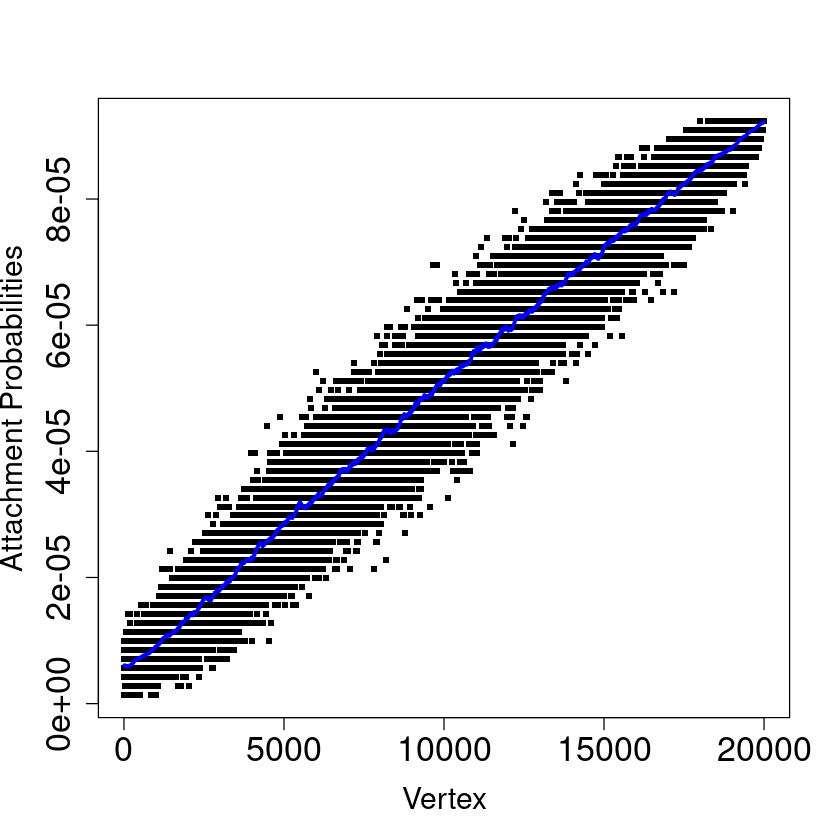}
					\subcaption{PA-APA-2, $t=20000$}
				\end{subfigure}
				\begin{subfigure}{.23\textwidth}
					\includegraphics[scale=0.22]{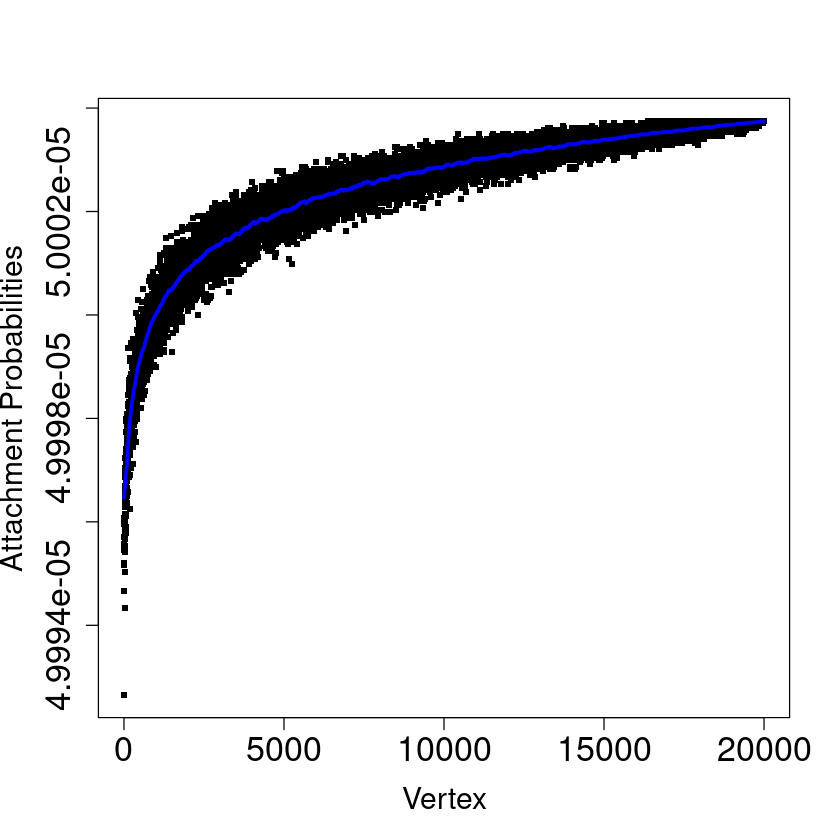}
					\subcaption{PA-APA, $t=20000$}
				\end{subfigure}
				\newline
				\begin{subfigure}{.23\textwidth}
					\includegraphics[scale=0.22]{comp3a.png}
					\subcaption{PA-APA-2, $t=100000$}
				\end{subfigure}
				\begin{subfigure}{.23\textwidth}
					\includegraphics[scale=0.22]{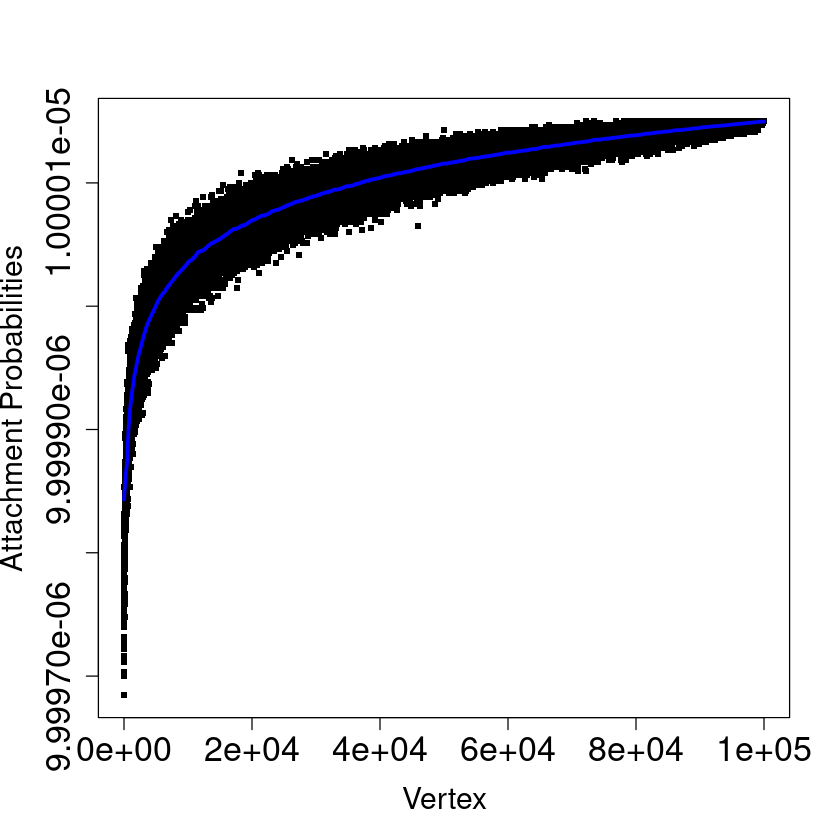}
					\subcaption{PA-APA, $t=100000$}
				\end{subfigure}
				\caption{\label{dada}Comparison of the preferential attachment probabilities of the PA-APA-2 model (left column) and the PA-APA model (right column) for different values of $t$. The models' parameters are set to $m =30$, $p=1$ (pure anti-preferential attachment).
				Note that the vertices are labelized by the time at which they appeared. In this way the $x$ axes of the above
				pictures represent all the vertices in the graph ordered
				by their appearance time: older vertices are close to the
				origin and more recent vertices close to $t$.  
				In blue, a running mean to analyze better the overall behaviour of the probabilities; the window's width is the same for both models and is equal, from top to bottom, to $10, 100, 200, 1000$.}
			\end{figure}		
		
	\section{Conclusions}
		
		The PA-APA model enriches the structure of a pure
		preferential attachment random graph by allowing edge linking
		by means of anti-preferential attachment probabilities. These
		induce in the graphs an effect of degree homogenization
		which depend on the balancing parameter $p$. It turns out that the
		parameter $p$ is linked with several aspects of the graph structure, such as
		the rate of growth of the (expected) degree and the assortativity level.
		
		Furthermore, we have compared the PA-APA model with the PA-APA-2 model, which we have analyzed only via simulations. For the latter model the anti-preferential attachment probabilities are
		calculated with respect to the random current maximum of the
		degree process. The degree homogenization is here less
		evident, mainly due to the structure of the attachment probabilities.
		A thorough theoretical analysis of the PA-APA-2 model would be interesting in principle but, based on the results we obtained
		by means of simulations, difficult in practice.

	\appendices
	
	\section{Technical lemmas and proofs of theorems}\label{app}

			Next four lemmas, for which we omit the proofs, are needed to prove Theorem \ref{propa}.

			\begin{lem}
				\label{primo}
				The limit $\lim_{t\rightarrow \infty}P(k,t)$ exists if and only if $\lim_{t\rightarrow \infty}Q(k,t)$ exists,
				in which case the two limits coincide.
			\end{lem}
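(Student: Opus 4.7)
The plan is to observe that $P(k,t)$ and $Q(k,t)$ differ only by a vanishing boundary term coming from the special vertex $v_1$ (the one with self-loops).

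First I would write $P(k,t)$ explicitly in terms of the single-vertex probabilities. By linearity of expectation and the definition of $N_k(t)$,
\begin{equation*}
P(k,t) = \frac{1}{t}\sum_{j=1}^{t}\mathbb{P}(d(v_j,t)=k).
\end{equation*}
Splitting off the $j=1$ term and using the definition of $Q(k,t)$,
\begin{equation*}
P(k,t) = \frac{1}{t}\mathbb{P}(d(v_1,t)=k) + \frac{t-1}{t}\,Q(k,t).
\end{equation*}

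Second, I would control the two discrepancies. Since $\mathbb{P}(d(v_1,t)=k) \in [0,1]$, the first summand is bounded by $1/t$ and hence tends to $0$. Since $Q(k,t) \in [0,1]$, the difference
\begin{equation*}
P(k,t) - Q(k,t) = \frac{1}{t}\mathbb{P}(d(v_1,t)=k) - \frac{1}{t}Q(k,t)
\end{equation*}
satisfies $|P(k,t)-Q(k,t)| \le 2/t \to 0$.

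Finally, this bound immediately yields the equivalence: if $Q(k,t) \to L$ then $P(k,t) \to L$, and conversely if $P(k,t) \to L$ then $Q(k,t) = (P(k,t) - t^{-1}\mathbb{P}(d(v_1,t)=k))\cdot t/(t-1) \to L$. There is really no obstacle here; the lemma is a routine observation whose only content is that the contribution of the initial vertex $v_1$ (which carries self-loops and is therefore treated asymmetrically) is negligible in the Cesàro-type average, so the authors presumably omit the proof for exactly this reason.
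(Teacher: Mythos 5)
Your proof is correct: the identity $P(k,t) = \tfrac{1}{t}\mathbb{P}(d(v_1,t)=k) + \tfrac{t-1}{t}Q(k,t)$, together with the boundedness of the probability and of $Q(k,t)$, gives the equivalence in both directions. The paper omits the proof of this lemma entirely, and your routine argument is evidently the intended one, so there is nothing to compare beyond noting that you have supplied exactly the missing (and elementary) verification.
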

			\begin{lem}
				\label{med}
				Recalling that the degree process $(d(v_i,t))_{t\ge i}$ is non decreasing for every given vertex $v_i$,
				for $m \le k \le (t-i+1)m$, $t\geq i$, and $i \ge 2$,
				\begin{align*}
					f(v_i,&k,t)=\sum_{j=1}^{(k-m) \wedge m}\binom{m}{j}H(k-j,t-1)^j \notag \\
					\times{}& K(k-j,t-1)^{m-j}\mathbb{P}(d(v_i,t-1)=k-j).
				\end{align*}
			\end{lem}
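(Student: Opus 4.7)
The plan is to decompose $f(v_i,k,t)$ by conditioning on the degree of $v_i$ at the preceding step, and then to evaluate the one-step transition probability at step $t$.

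First, because $(d(v_i,s))_{s\ge i}$ is non-decreasing, the event $\{d(v_i,t)=k,\ d(v_i,s)\neq k\text{ for }s=i,\dots,t-1\}$ is the disjoint union, over admissible jumps $j\ge 1$, of the events $\{d(v_i,t-1)=k-j,\, d(v_i,t)=k\}$. Indeed, once $j\ge 1$ one has $d(v_i,t-1)<k$, so monotonicity automatically forces $d(v_i,s)<k$ for every $s\le t-1$ and the trajectory constraint present in the definition of $f$ is free. The range $j\in\{1,\dots,m\wedge(k-m)\}$ then follows from two observations: at most $m$ edges can attach to $v_i$ in a single step, and $d(v_i,s)\ge m$ for every $s\ge i$ since $i\ge 2$ (so $k-j\ge m$).

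Next, using the tower property and the $\mathcal{F}_{t-1}$-measurability of $\{d(v_i,t-1)=k-j\}$, I would write
\[
\mathbb{P}\bigl(d(v_i,t-1)=k-j,\,d(v_i,t)=k\bigr)=\mathbb{E}\bigl[\mathbbm{1}_{\{d(v_i,t-1)=k-j\}}\,\pi_j\bigr],
\]
where $X_t$ denotes the number of edges added at step $t$ that land on $v_i$ and $\pi_j:=\mathbb{P}(X_t=j\mid \mathcal{F}_{t-1})$. On $\{d(v_i,t-1)=k-j\}$ the per-edge unconditional attachment probability to $v_i$ is the deterministic quantity $H(k-j,\cdot)$, so $\pi_j$ factors out of the expectation; a binomial counting argument for the $j$ successes among the $m$ draws then yields the factor $\binom{m}{j}H(k-j,\cdot)^{j}K(k-j,\cdot)^{m-j}$. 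Recognising $\mathbb{E}\bigl[\mathbbm{1}_{\{d(v_i,t-1)=k-j\}}\bigr]=\mathbb{P}(d(v_i,t-1)=k-j)$ and summing over the admissible $j$ produces the claimed identity.

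The main obstacle will be justifying the binomial form of $\pi_j$ rigorously: the $m$ attachments within a single step share the common Bernoulli $Y_t$ that selects between the PA and APA rules, so they are independent only given $\mathcal{F}_{t-1}\vee\sigma(Y_t)$, and a priori $\pi_j$ is a two-component mixture of binomials rather than a single binomial in $H$. The cleanest route is to condition first on $Y_t$, write the PA and APA binomial components using \eqref{prefa} and \eqref{aprefa}, and then recombine with weights $1-p$ and $p$ so that the result can be expressed through the symbols $H$ and $K$ of the paper. Keeping track of the index shift (the step from $G_{t-1}$ to $G_t$ is indexed by $t$ in the definition of $H$) is a secondary piece of bookkeeping; everything else is a direct algebraic manipulation.
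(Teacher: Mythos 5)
The paper omits the proofs of Lemmas \ref{primo}--\ref{propprop}, so there is no written argument to compare against; your decomposition is certainly the intended one. The first half of your plan is correct: by monotonicity of $(d(v_i,s))_{s\ge i}$, the event defining $f(v_i,k,t)$ is exactly the disjoint union over $j\ge 1$ of $\{d(v_i,t-1)=k-j,\ d(v_i,t)=k\}$, and the range $1\le j\le (k-m)\wedge m$ follows because at most $m$ edges arrive per step and $d(v_i,s)\ge m$ for $i\ge 2$. One bookkeeping point you should resolve rather than defer: with the paper's convention, $H(k,t+1)$ is the per-edge probability of hitting a degree-$k$ vertex of $G_t$, so the step from $G_{t-1}$ to $G_t$ calls for $H(k-j,t)$; this is also what is used, via $G(l,s)$, in the proof of Theorem \ref{propa}, so the $t-1$ in the lemma's display is best read as that.

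The genuine gap is precisely the obstacle you flagged, and your proposed resolution does not close it. Conditioning on $Y_t$ (one Bernoulli variable governs all $m$ edges of a step) gives
\begin{align*}
\pi_j = p\binom{m}{j}q_A^j(1-q_A)^{m-j}+(1-p)\binom{m}{j}q_P^j(1-q_P)^{m-j},
\end{align*}
where $q_A,q_P$ are the per-edge APA and PA probabilities, and this mixture cannot be ``recombined'' into $\binom{m}{j}H^jK^{m-j}$ with $H=pq_A+(1-p)q_P$: already for $m=2$, $j=1$ the two expressions differ by $2p(1-p)(q_A-q_P)^2$, which is strictly positive for $p\in(0,1)$ whenever $q_A\neq q_P$. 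Hence the single-binomial form is exact only when $m=1$ or $p\in\{0,1\}$; for $p\in(0,1)$ and $m\ge 2$ the stated identity holds only up to an error quadratic in the $O(1/t)$ quantities $q_A,q_P$, i.e.\ $O(t^{-2})$. That error is harmless for Theorem \ref{propa} --- the limits of the ratios $L_{k,l}(t)$ are unchanged if the mixture replaces the single binomial, since $t\pi_1$ and $mtH(k-1,t)$ have the same limit and the $l\ge 2$ terms still vanish --- but it means the last step of your plan cannot be carried out as an identity. You should either prove the lemma with the two-component mixture in place of $\binom{m}{j}H^jK^{m-j}$ and propagate that version through the proof of Theorem \ref{propa}, or restrict the exact statement to the cases where it actually holds.
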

			\begin{lem}
				\label{ult}
				for $m \le k \le (t-i+1)m$, $t\geq i$, $i \ge 2$,
				\begin{equation*}
					\mathbb{P}(d(v_i,t)=k)=\sum_{s=i}^{t}f(v_i,k,s)\prod_{j=s}^{t-1}K(k,j+1)^m.
				\end{equation*}
			\end{lem}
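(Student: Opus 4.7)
The plan is to decompose the event $\{d(v_i,t)=k\}$ according to the first time at which the degree of $v_i$ hits $k$. Since the degree process $(d(v_i,r))_{r\ge i}$ is non-decreasing (as recalled in Lemma \ref{med}), on the event $\{d(v_i,t)=k\}$ the random variable $\tau_k:=\min\{r\ge i\colon d(v_i,r)=k\}$ is well-defined and takes values in $\{i,i+1,\dots,t\}$. Moreover, by monotonicity,
\begin{align*}
\{d(v_i,t)=k\}=\bigsqcup_{s=i}^{t}\bigl(\{\tau_k=s\}\cap\{d(v_i,r)=k,\ s\le r\le t\}\bigr),
\end{align*}
i.e.\ $v_i$ first reaches degree $k$ at some time $s\le t$ and then remains at degree $k$ until time $t$. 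The event $\{\tau_k=s\}$ is $\mathcal{F}_s$-measurable and has probability equal to $f(v_i,k,s)$ directly by definition (the empty product convention handles the boundary term $s=t$).

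For the remaining conditional probability that $v_i$ retains degree $k$ from time $s$ up to time $t$, I would apply the tower property iteratively along the filtration $\mathcal{F}_s\subset\mathcal{F}_{s+1}\subset\cdots\subset\mathcal{F}_{t-1}$, peeling off one time step at a time. On the event $\{d(v_i,j)=k\}$, the probability that none of the $m$ edges added at time $j+1$ attach to $v_i$ equals $K(k,j+1)^m$, consistently with the use of $H^{j}K^{m-j}$ in Lemma \ref{med} (i.e.\ treating the $m$ edge draws as independent with marginal non-attachment probability $K(k,j+1)=1-H(k,j+1)$). Since this quantity depends only on $k$ and $j+1$ and not on the remaining configuration of $G_j$, it passes out of the conditional expectation as a deterministic factor. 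Chaining the resulting $t-s$ factors gives
\begin{align*}
\mathbb{P}\bigl(d(v_i,r)=k,\ s\le r\le t\,\bigm|\,\tau_k=s\bigr)=\prod_{j=s}^{t-1}K(k,j+1)^m,
\end{align*}
and multiplying by $\mathbb{P}(\tau_k=s)=f(v_i,k,s)$ and summing over $s$ yields the stated identity.

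The only subtle step, and hence the main obstacle, is the per-time-step factorization: one must justify that given $\mathcal{F}_j$, the probability that no new edge attaches to a fixed vertex of current degree $k$ equals $K(k,j+1)^m$. This is the same implicit convention already adopted in Lemma \ref{med}, where the $m$ edge selections are treated as marginally independent with attachment probability $H(k,j+1)$. Once this convention is in force, the rest of the argument is bookkeeping: a disjoint-union decomposition over the first-hitting time $\tau_k$, followed by a telescoping tower-property computation of the ``no further edges hit $v_i$'' event at each successive time step.
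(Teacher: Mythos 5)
The paper omits the proof of this lemma entirely (``Next four lemmas, for which we omit the proofs\dots''), so there is no printed argument to compare against; your proof is the natural one and is almost certainly what the authors intended, since $f(v_i,k,s)$ is defined precisely as the first-passage probability of the non-decreasing degree process to level $k$. The decomposition over $\tau_k$, the identification $\mathbb{P}(\tau_k=s)=f(v_i,k,s)$, and the telescoping of the one-step retention factors via the tower property (legitimate because the factor $K(k,j+1)^m$ is deterministic given only $k$ and $j$) are all correct. The one point you flag as delicate is indeed the only place where anything could go wrong, and it is worth being precise about it: under the model as literally stated, a single $Y_{j+1}$ governs all $m$ edge draws at step $j+1$, so the exact conditional probability that no edge hits a degree-$k$ vertex is the mixture $p(1-q_1)^m+(1-p)(1-q_0)^m$, with $q_1$ and $q_0$ the anti-preferential and preferential attachment probabilities from \eqref{aprefa} and \eqref{prefa}; by convexity this is $\ge K(k,j+1)^m$, with equality only when $m=1$, $p\in\{0,1\}$, or $q_0=q_1$. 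Treating the $m$ draws as marginally independent with attachment probability $H$ is, however, exactly the convention the paper itself adopts in Lemma~\ref{med} and in the proof of Theorem~\ref{waif} (where the increment is declared Binomial with parameter $H$), so this discrepancy is inherited from the paper rather than introduced by you; within that convention your proof is complete.
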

			\begin{lem}[Stolz--Cesaro's theorem]
				\label{propprop}
				Let $(x_n)_{n\in \mathbb{N}^*}$ and $(y_n)_{n\in \mathbb{N}^*}$ be two sequences of real  numbers. Suppose that $y_n>0$ and $y_n<y_{n+1}$ for every $n \in \mathbb{N}^*$. Further, assume that $y_n\rightarrow \infty$ as $n\rightarrow \infty$. If the following limit exists:
				\begin{equation*}
					\lim_{n\rightarrow \infty}\frac{x_{n+1}-x_n}{y_{n+1}-y_n} = l,
				\end{equation*}
				then also the limit $\lim_{n \rightarrow \infty}x_n/y_n$ exists and is equal to $l$.
			\end{lem}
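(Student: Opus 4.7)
The plan is to give the classical $\varepsilon$-$N$ argument based on telescoping, which exploits the two key hypotheses that $(y_n)$ is strictly increasing and unbounded.

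First I would fix $\varepsilon > 0$ and use the assumption $\lim_{n\to\infty}(x_{n+1}-x_n)/(y_{n+1}-y_n) = l$ to choose $N \in \mathbb{N}^*$ such that for every $n \ge N$,
\begin{equation*}
l - \varepsilon < \frac{x_{n+1}-x_n}{y_{n+1}-y_n} < l + \varepsilon .
\end{equation*}
Since $y_{n+1} - y_n > 0$ by the strict monotonicity assumption, multiplying through preserves the inequalities and yields $(l-\varepsilon)(y_{n+1}-y_n) < x_{n+1}-x_n < (l+\varepsilon)(y_{n+1}-y_n)$.

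Next I would sum the above inequalities telescopically from $n = N$ to $n = k-1$, for any $k > N$. The right-hand and left-hand sides telescope to $(l\pm\varepsilon)(y_k - y_N)$, while the middle telescopes to $x_k - x_N$, giving
\begin{equation*}
(l-\varepsilon)(y_k-y_N) < x_k - x_N < (l+\varepsilon)(y_k-y_N).
\end{equation*}
Dividing by $y_k > 0$ and rewriting $x_k/y_k = x_N/y_k + (x_k - x_N)/y_k$, I obtain
\begin{equation*}
\frac{x_N}{y_k} + (l-\varepsilon)\Bigl(1 - \frac{y_N}{y_k}\Bigr) < \frac{x_k}{y_k} < \frac{x_N}{y_k} + (l+\varepsilon)\Bigl(1 - \frac{y_N}{y_k}\Bigr).
\end{equation*}

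Finally I would take $k \to \infty$ and use $y_k \to \infty$, which forces both $x_N/y_k \to 0$ and $y_N/y_k \to 0$ (here $N$ is fixed, so $x_N$ and $y_N$ are constants). This gives $l - \varepsilon \le \liminf_{k}x_k/y_k \le \limsup_{k}x_k/y_k \le l + \varepsilon$, and since $\varepsilon > 0$ is arbitrary, $\lim_{k\to\infty}x_k/y_k = l$ as desired. The argument has no real obstacle; the only delicate point is that the strict inequality $y_{n+1} > y_n$ is what allows the telescoping step to preserve the direction of the bounds, and the divergence $y_n \to \infty$ is what kills the boundary terms $x_N/y_k$ and $y_N/y_k$ in the final limit.
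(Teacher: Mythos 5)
Your proof is correct and complete: the telescoping of the two-sided bound $(l-\varepsilon)(y_{n+1}-y_n) < x_{n+1}-x_n < (l+\varepsilon)(y_{n+1}-y_n)$, followed by division by $y_k$ and the observation that the boundary terms $x_N/y_k$ and $y_N/y_k$ vanish as $y_k \to \infty$, is the standard argument for the Stolz--Cesaro theorem with finite limit $l$. The paper states this lemma without proof, as it is a classical result, and your argument is precisely the canonical one; there is nothing to compare beyond noting that you have supplied the omitted details correctly.
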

			
						\begin{proof}[\textbf{\textup{Proof of Theorem \ref{propa}}}]
							We adapt a proof technique from \cite{MR2807479} to determine the asymptotics of the expected proportion of vertices of a given degree.						
						
							Let $p\in [0,1]$. We start by proving that $P(m)$ exists and is equal to $2/(2+m+mp)$
							if $p \in [0,1)$ and to $1/(m+1)$ if $p=1$. Then we extend the result to all $k> m$.
				
						 	For $k=m$, note that $\mathbb{P}(d(v_t,t)=m)=1$, $t \ge 2$, as $v_t$ is connected to $m$ vertices when it is added to $G_{t-1}$. Moreover, $\mathbb{P}(d(v_i,t)=m)=\mathbb{P}(d(v_i,t-1)=m)K(m,t)^m$, $t \ge 3$, as a vertex with degree $m$ at time $t-1$ keeps its degree unchanged at time $t$ provided it receives none of the $m$ edges attached to the newborn vertex $v_t$. Now observing that, for $t\geq 3$,
							\begin{align*}
								Q(m,t) & =\frac{1}{t-1}\sum_{j=2}^{t-1}\mathbb{P}(d(v_j,t-1)=m)K(m,t)^m\notag\\
								&+\frac{1}{t-1},
							 \end{align*}
							 we obtain the following recurrence equation for $Q(m,t)$:
						 	\begin{align}
							 	\label{equaz}
							 	Q(m,t)&=\frac{t-2}{t-1}K(m,t)^mQ(m,t-1)\notag \\
							 	&+\frac{1}{t-1} \hspace{0.3cm}, \qquad  t\geq 3.
						 	\end{align}
						 	Iterating on $t$ it is immediate to show that the solution of \eqref{equaz} is
						 	\begin{align*}
							 	Q(m,t)&=\frac{1}{t-1}\prod_{j=2}^{t-1}K(m,j+1)^m\notag \\
							 	& \times \left[1+\sum_{h=2}^{t-1}\prod_{k=2}^{h}K(m,k+1)^{-m}\right], \quad t\geq3.
						 	\end{align*}
						 	Next we define, for $t\geq 3$, the following numerical sequences:
						 	\begin{align*}
						 		&x_t=1+\sum_{h=2}^{t-1}\prod_{k=2}^{h}K(m,k+1)^{-m}, \\
								&y_t=(t-1)\prod_{j=2}^{t-1}K(m,j+1)^{-m},
							\end{align*}
						 	so that $Q(m,t)=x_t/y_t$. Observe that $(y_t)_{t\geq 3}$ is a positive strictly increasing sequence with $y_t\rightarrow \infty$ as $t\rightarrow \infty$. Moreover,
						 	\begin{equation*}
							 	\frac{x_{t+1}-x_t}{y_{t+1}-y_t}=\frac{1}{t-(t-1)K(m,t+1)^m}, \qquad t\geq 3.
							\end{equation*}
							We substitute the explicit expression for function $K$ and rewrite the above ratio
							as 
						 	\begin{align*}
							 	\frac{x_{t+1}-x_t}{y_{t+1}-y_t}&=\left(mp\frac{2m+1/t-m/t}{2m+1/t-2m/t} \right.\\
							 	&\left.+(1-p)
							 	\frac{m}{2}+1+o(1/t)\right)^{-1}.
							\end{align*}
							Now, Lemma \ref{propprop} and Lemma \ref{primo} immediately yield
							\begin{align*}
							 	\lim_{t\rightarrow \infty}P(m,t)
							 	&= \lim_{t\rightarrow \infty}Q(m,t)\\ &= \lim_{t\rightarrow \infty}\frac{x_{t+1}-x_t}{y_{t+1}-y_t}
							 	=\frac{2}{2+m+mp},
							\end{align*}
							which reduces to $\lim_{t\rightarrow \infty}P(m,t) = 1/(1+m)$ if $p=1$. 
			 	
						 	For $k > m$, $t \ge 2$, we proceed as in the following.
						 	First, note that $\mathbb{P}(d(v_1,t)=k) = 0$ if $k < 2m$ or $k > (t+1)m$, and for $i \ge 2$, $\mathbb{P}(d(v_i,t)=k) = 0$ if $k >(t-i+1)m$. Observe that
						 	\begin{equation*}
							 	P(k,t)=\frac{1}{t}\sum_{j=2}^{t}\mathbb{P}(d(v_j,t)=k)+\frac{1}{t}\mathbb{P}(d(v_1,t)=k),
							\end{equation*}
							which, by using Lemmas \ref{med} and \ref{ult}, can be written as
						 	\begin{align*}
							 	P&(k,t)
							 	=\frac{1}{t}\sum_{j=2}^{t}\Biggl[\sum_{s=j}^{t}\Biggl(\sum_{l=1}^{(k-m)\wedge m}G(l,s) \\
							 	& \times \mathbb{P}(d(v_j,s-1)=k-l)\Biggr)\prod_{h=s}^{t-1}K(k,h+1)^m\Biggr] \\
							 	& +\frac{1}{t}\mathbb{P}(d(v_1,t)=k),
							\end{align*}
						 	where $G(l,s)=\binom{m}{l}H(k-l,s)^lK(k-l,s)^{m-l}$.
						 	By exchanging the order of summations we obtain
						 	\begin{align*}
							 	P&(k,t) =\sum_{l=1}^{(k-m)\wedge m}\frac{1}{t}\sum_{j=2}^{t}\Biggl[\sum_{s=j}^{t}G(l,s)\\
							 	&\times\mathbb{P}(d(v_j,s-1)=k-l)\left(\prod_{h=s}^{t-1}K(k,h+1)^m\right)\Biggr]\\
							 	&+\frac{1}{t}\mathbb{P}(d(v_1,t)=k) \\
								& =\sum_{l=1}^{(k-m)\wedge m} \frac{1}{t}\sum_{s=2}^{t}\Biggl[G(l,s)\prod_{h=s}^{t-1}K(k,h+1)^m \\
								&\times \sum_{j=2}^{s}\mathbb{P}(d(v_j,s-1)=k-l)\Biggr]\\
								&+\frac{1}{t}\mathbb{P}(d(v_1,t)=k). \notag
							\end{align*}
							Recalling that $\mathbb{P}(d(v_s,s-1)\ne 0)=0$ and since
						 	$\sum_{j=2}^{s-1}\mathbb{P}(d(v_j,s-1)=k-l)=(s-2)Q(k-l,s-1)$,
						 	we have that
						 	\begin{align*}
							 	P&(k,t) =  \sum_{l=1}^{(k-m)\wedge m}\left(\frac{1}{t}\prod_{h=1}^{t-1}K(k,h+1)^m \right)\\
							 	&\times \sum_{s=2}^{t}G(l,s)(s-2)Q(k-l,s-1) \\
							 	&\times \prod_{r=1}^{s-1}K(k,r+1)^{-m}
							 	+\frac{1}{t}\mathbb{P}(d(v_1,t)=k). \notag
							\end{align*}
						 	Wishing to apply Lemma \ref{propprop}, we define the following numerical sequences $(z_{t,l})_{t\ge 2}$, for each $l \ge 1$, and $(w_t)_{t\ge 2}$:
						 	\begin{align*}
						 		z_{t,l} & =\sum_{s=2}^{t}G(l,s)(s-2)Q(k-l,s-1) \\
						 		& \times \prod_{r=1}^{s-1}K(k,r+1)^{-m}, \qquad l \ge 1,\\
						 		w_t & = t\prod_{h=1}^{t-1}K(k,h+1)^{-m}.
						 	\end{align*}
						 	Notice that $(w_t)_t$ is strictly positive and strictly increasing towards infinity. Moreover, by Lemma \ref{propprop},
							\begin{align*}
								P&(k) =\sum_{l=1}^{(k-m)\wedge m} \lim_{t\rightarrow \infty}\frac{z_{t+1,l}-z_t}{w_{t+1}-w_t}\\
								&=\sum_{l=1}^{(k-m)\wedge m} \lim_{t\rightarrow \infty}\frac{G(l,t+1)(t-1)}{t+1-tK(k,t+1)^m}Q(k-l,t) \\
								& =\sum_{l=1}^{(k-m)\wedge m} \lim_{t\rightarrow \infty} L_{k,l}(t) Q(k-l,t). \notag
							\end{align*}
						 	Taking into account Lemma \ref{primo}, we derive the limit value of each term of the latter sum. Specifically,
						 	for $l=1$ we have
						 	\begin{align*}
							 	&L_{k,1}(t)\\
							 	&= 	\frac{mp\frac{2m+1/t-k/t+1/t}{2m+1/t-2m/t}+m(1-p)
							 	\frac{k-1}{2m}+o(1/t)}{1+mp\frac{2m+1/t-k/t}{2m+1/t-2m/t}+m(1-p)\frac{k}{2m}+o(1/t)} \\
							 	&\overset{t\rightarrow \infty}{\longrightarrow}
								\frac{2mp+(1-p)(k-1)}{2+2mp+(1-p)k}.
							\end{align*}
							Similarly, it is straightforward to see that $\lim_{t \to \infty}L_{k,l}(t)=0$ for each $l \ge 2$.
							
						 	Hence, for $k > m$,
						 	\begin{equation}
							 	\label{zanguni}
							 	P(k)=\frac{2mp+(1-p)(k-1)}{2+2mp+(1-p)k}P(k-1).
						 	\end{equation}
						 	If $p=1$, by iterating backwards $(k-m)$ times, we obtain
						 	\begin{align*}
							 	P(k)&=P(m)\left(\frac{m}{m+1}\right)^{k-m}\\
							 	&=\frac{1}{m+1}\left(\frac{m}{m+1}\right)^{k-m},
							\end{align*}
						 	proving the second part of the statement.
						 	If $p\in [0,1)$, rewriting \eqref{zanguni} as
							\begin{equation*}
								P(k)=\frac{k+\frac{p(2m+1)-1}{1-p}}{k+\frac{2(mp+1)}{1-p}}P(k-1),
							\end{equation*}
						 	and again iterating backward $(k-m)$ times we get
						 	\begin{align*}
							 	P(k)& =
								P(m)\frac{\Gamma\left(m+1+\frac{2(m+1)}{1-p}\right)}{\Gamma\left(m+1+\frac{p(2m+1)-1}{1-p}\right)}\\
								&\times\frac{\Gamma\left(k+1+\frac{p(2m+1)-1}{1-p}\right)}{\Gamma\left(k+1+\frac{2(m+1)}{1-p}\right)} \\
								& = \frac{2}{2+m+mp}\cdot \frac{\xi(k)}{\xi(m)}, \notag
							\end{align*}
							which proves the first part of the statement.
						\end{proof}
						
									\begin{proof}[\textbf{\textup{Proof of Theorem \ref{waif}}}]
										Let us proceed by induction.
										We will only prove the result for $i\geq 2$ as the case $i=1$ can be proved easily.
										Note first that for $t=i$ we have $\mathbb{E}[d(v_i,i)] = (1+\delta_{i1})m$. Let us now suppose that \eqref{wwaif} holds for some $t>i$. Since the increment $d(v_i,t+1)-d(v_i,t)$, conditional on $\mathcal{F}_t$, follows a binomial distribution with parameters $m$ and
										$H(d(v_i,t),t+1)$
										we have
										\begin{align}
											\label{expe}
											&\mathbb{E}[d(v_i,t+1)|\mathcal{F}_t]\notag \\
											& = d(v_i,t)+\mathbb{E}[d(v_i,t+1)-d(v_i,t)|\mathcal{F}_t] \notag\\
											& =d(v_i,t)\left(1+(1-p)\frac{1}{2t}-pe_t\right)+p\,m\,c_t.
						 				\end{align}
										Taking expectations on both sides and using the inductive hypothesis we obtain
										\begin{align*}
											\mathbb{E}[d(v_i,t+1)]
											&= \left[m\prod_{l=i}^{t-1}C(l,p,m)\right.\\
											&+ \left.\sum_{l=i}^{t-1}m\,p\,c_l
											\prod_{h=l+1}^{t-1}C(h,p,m)\right]\\
											& \times \left(1+(1-p)\frac{1}{2t}-pe_t\right)\\
											&+p\,m\,c_t m\prod_{l=i}^{t}C(l,p,m)\\
											&+\sum_{l=i}^{t}m\,p\,c_l\prod_{h=l+1}^{t}C(h,p,m),\notag
										\end{align*}
										as required.	
									\end{proof}
									
							\begin{proof}[\textbf{\textup{Proof of Theorem \ref{sqrtno}}}]
								Observe that for every $t\geq i+1$ we have
								\begin{align}
									\label{calif}
									& \left|\mathbb{E}\left[\frac{d(v_i,t)}{\sum_{l=i}^{t-1}\ln\left(1+c_l\right)\prod_{h=l+1}^{t-1}\left(1-e_h\right)}\right]-m\right|\notag \\
									&\leq \left|\frac{m\prod_{l=i}^{t-1}\left(1-e_l\right)}{\sum_{l=i}^{t-1}\ln\left(1+c_l\right)\prod_{h=l+1}^{t-1}\left(1-e_h\right)}\right| \notag\\
									&  +\left|\frac{\sum_{l=i}^{t-1}m\,c_l\prod_{h=l+1}^{t-1}\left(1-e_h\right)}{\sum_{l=i}^{t-1}\ln\left(1+c_l\right)\prod_{h=l+1}^{t-1}\left(1-e_h\right)}-m\right|.
								\end{align}
								It is not difficult to see that the first term of the right-hand side of \eqref{calif} vanishes as $t \to \infty$.
								Indeed,
								\begin{align*}
									0 &\le \frac{m\prod_{l=i}^{t-1}\left(1-e_l\right)}{\sum_{l=i}^{t-1}\ln\left(1+c_l\right)\prod_{h=l+1}^{t-1}\left(1-e_h\right)}\\
									& \le
									\frac{m}{\sum_{l=i}^{t-1}\ln\left(1+c_l\right)}
									\le \frac{m}{\sum_{l=i}^{t-1}\ln\left(1+1/l\right)},
								\end{align*}
								which goes to zero for $t\to \infty$ as $\sum_{l=i}^{t-1}\ln\left(1+1/l\right)=\ln t-\sum_{l=1}^{i-1}\ln\left(1+1/l\right)$.
								In order to prove the limit \eqref{cali} there remains to show that the second term in the right-hand side of \eqref{calif}
								vanishes as well.
								To see this, first observe that the infinite product $a=\prod_{l=i}^{\infty}\left(1-e_l\right)$
								is convergent as the related series $\sum_{l=i}^\infty e_l$ converges by Raabe's test. Moreover, $0<a<1$ as $0<(1-e_l)<1$ for each $l > 1$.
								Consequently,
								\begin{align}
									\label{rab}
									&\lim_{t\rightarrow \infty}\sum_{l=i}^{t-1}\ln\left(1+c_l\right)\prod_{h=l+1}^{t-1}\left(1-e_h\right)\notag\\
									&\geq \lim_{t\rightarrow \infty}\prod_{h=i}^{t-1}\left(1-e_h\right)\left[\ln t-\sum_{l=1}^{i-1}\ln\left(1+\frac{1}{l}\right)\right]=\infty
								\end{align}
								Now
								\begin{align*}
									&\left|\frac{\sum_{l=i}^{t-1}mc_l\prod_{h=l+1}^{t-1}\left(1-e_h\right)}{\sum_{l=i}^{t-1}\ln\left(1+c_l\right)\prod_{h=l+1}^{t-1}\left(1-e_h\right)}-m\right|\\
									&=\left|\frac{m\sum_{l=i}^{t-1}\left[\prod_{h=l+1}^{t-1}\left(1-e_h\right)\right]\left(c_l-\ln\left(1+c_l\right)\right)}{\sum_{l=i}^{t-1}\ln\left(1+c_l\right)\prod_{h=l+1}^{t-1}\left(1-e_h\right)}\right|.
								\end{align*}
								Recall that for $x\in (0,1]$, expanding the logarithm around zero, we can write
								\begin{equation*}
									x-\ln(1+x)=\frac{x^2}{2(1+\eta)^2}
								\end{equation*}
								where $\eta \in (0,x)$. Noticing that, for $l\geq i$, we have $c_l< 1$, and we obtain
								$0<c_l-\ln\left(1+c_l\right)<c_l^2/2$. Hence
								\begin{align*}
									&\left|\frac{m\sum_{l=i}^{t-1}\left[\prod_{h=l+1}^{t-1}\left(1-e_h\right)\right]\left(c_l-\ln\left(1+c_l\right)\right)}{\sum_{l=i}^{t-1}\ln\left(1+c_l\right)\prod_{h=l+1}^{t-1}\left(1-e_h\right)}\right|\\
									& < \frac{(m/2)\sum_{l=i}^{t-1}\left[\prod_{h=l+1}^{t-1}\left(1-e_h\right)\right]c_l^2}{\sum_{l=i}^{t-1}\ln\left(1+c_l\right)\prod_{h=l+1}^{t-1}\left(1-e_h\right)}\\
									& \leq \frac{(m/2)\sum_{l=i}^{t-1}c_l^2}{\sum_{l=i}^{t-1}\ln\left(1+c_l\right)\prod_{h=l+1}^{t-1}\left(1-e_h\right)}.\notag
								\end{align*}
								Finally, recalling \eqref{rab} and since $\sum_{l=i}^{\infty}c_l^2<\infty$, it follows that
								\begin{equation*}
									\lim_{t \rightarrow \infty}\left|\frac{\sum_{l=i}^{t-1}mc_l\prod_{h=l+1}^{t-1}\left(1-e_h\right)}{\sum_{l=i}^{t-1}\ln\left(1+c_l\right)\prod_{h=l+1}^{t-1}\left(1-e_h\right)}-m\right|=0.
								\end{equation*}
							\end{proof}
							
					\begin{proof}[\textbf{\textup{Proof of Theorem \ref{esta}}}]
						First, note that
						\begin{align*}
							&\prod_{l=i}^{t-1}C(l,p,m)
							\leq \exp\left(\sum_{l=i}^{t-1}\ln\left(1+\frac{1-p}{2l}\right)\right) \\
							&\leq
							t^{\frac{1-p}{2}}\exp\left(-\frac{1-p}{2}\ln t+\frac{1-p}{2}\sum_{l=1}^{t-1}\frac{1}{l}\right)  \\
							&=t^{\frac{1-p}{2}}\exp\left(\frac{1-p}{2}\left(\sum_{l=1}^{t-1}\frac{1}{l}-\ln t\right)\right)\\
							&=O\left(t^{(1-p)/2}\right), \notag
						\end{align*}			
						Now, let $l_0$ be such that $l>(1-p)^{-1}-(2m)^{-1}$ for all $l\geq l_0$. For $t\geq \max(l_0+1,i+1)$,
						\begin{align*}
							&\mathbb{E}[d(v_i,t)]\\
							&=m\prod_{l=i}^{t-1}C(l,p,m)+\sum_{l=i}^{t-1}mp\, c_l\prod_{h=l+1}^{t-1}C(h,p,m) \\
							& \le O\left(t^{(1-p)/2}\right) +\left(\prod_{h=l_0+1}^{t-1}C(h,p,m)\right)\sum_{l=i}^{t-1}mp\, c_l \notag\\
							&= O\left(t^{(1-p)/2}\right) + O\left(t^{(1-p)/2}\right) O\left(\ln t^p\right)\\
							&= O\left(t^{(1-p)/2}\ln t^p\right). \notag
						\end{align*}
					\end{proof}
					
						\begin{proof}[\textbf{\textup{Proof of Theorem \ref{elvira}}}]
							Let us fix $s>1$. For $t\geq i+1$ define the stochastic process
							\begin{align}
								\label{stopro}
								S_{i,s}(t) &= \frac{d(v_i,t)}{\left(\sum_{l=i}^{t-1}\ln\left(1+c_l\right)\right)^s}\notag\\
								&+m\sum_{k=t}^{\infty}\frac{c_k}{\left(\sum_{l=i}^{k-1}\ln\left(1+c_l\right)\right)^s}.
							\end{align}		
							We claim $(S_{i,s}(t))_{t\geq i+1}$ is a non-negative supermartingale relative to $(\mathcal{F}_t)_{t\geq i+1}$.
							To see this, first observe that $c_k/\ln^sk=O\left(1/(k\ln^sk)\right)$
							and $\sum_{l=i}^{k-1}\ln\left(1+c_l\right)=O(\ln k)$.
							Now, since $s>1$,
							\begin{align*}
								\sum_{k=t}^{\infty}\frac{2^k}{2^k\left(\ln 2^k\right)^s}=\frac{1}{\ln^s 2}\sum_{k=t}^{\infty}\frac{1}{k^s}<\infty,
							\end{align*}
							and hence, according to the Cauchy condensation test we conclude that
							$\sum_{k=t}^{\infty}1/(k\ln^s k)<\infty$.
							Therefore, the series in \eqref{stopro} is convergent. Since $\mathbb{E}[d(v_i,t)]<\infty$ it follows that $\mathbb{E}[S_{i,s}(t)]<\infty$ for every $t\geq i+1$ as well. Clearly, $S_{i,s}(t)$ is $\mathcal{F}_t$-measurable for every $t\geq i+1$. Finally, recalling \eqref{expe}, $p=1$,
							\begin{align*}
								&\mathbb{E}[S_{i,s}(t+1)|\mathcal{F}_t]\\ &=\left(\sum_{l=i}^{t}\ln\left(1+c_l\right)\right)^{-s}\mathbb{E}[d(v_i,t+1)|\mathcal{F}_t]\\
								&+m\sum_{k=t+1}^{\infty}\frac{c_k}{\left(\sum_{l=i}^{k-1}\ln\left(1+c_l\right)\right)^s}\\
								& =\left(\sum_{l=i}^{t}\ln\left(1+c_l\right)\right)^{-s}\Bigl[d(v_i,t)\left(1-e_t\right)
								+m\,c_t\Bigr]\\
								&+m\sum_{k=t+1}^{\infty}\frac{c_k}{\left(\sum_{l=i}^{k-1}\ln\left(1+c_l\right)\right)^s} \notag\\
								& \leq \left(\sum_{l=i}^{t-1}\ln\left(1+c_l\right)\right)^{-s}\Bigl[d(v_i,t)
								+m\,c_t\Bigr]\\
								&+m\sum_{k=t+1}^{\infty}\frac{c_k}{\left(\sum_{l=i}^{k-1}\ln\left(1+c_l\right)\right)^s} \notag \\
								& =S_{i,s}(t), \notag
							\end{align*}
							as required. Therefore $(S_{i,s}(t))_{t\geq i+1}$ is a non-negative supermartingale. Consequently, there exist a non-negative random variable $X_{i,s}$ with $\mathbb{E}(X_{i,s})<\infty$ such that $S_{i,s}(t)\overset{a.s.}{\longrightarrow}X_{i,s}$ as $t\rightarrow \infty$.
							Plainly, the convergence of the series in \eqref{stopro} entails that for every $s>1$,
							\begin{align}
								\label{tazzina}
								\frac{d(v_i,t)}{\left(\sum_{l=i}^{t-1}\ln\left(1+c_l\right)\right)^{s}}\overset{a.s.}{\longrightarrow}X_{i,s} \qquad \text{as }t\rightarrow \infty.
							\end{align}
							Let now $s_0\in (1,s)$. Since
							\begin{align*}
								\left(\sum_{l=i}^{t-1}\ln\left(1+c_l\right)\right)^{-(s-s_0)}\overset{t\rightarrow \infty}{\longrightarrow}0,
							\end{align*}
							it follows that almost surely
							\begin{align*}
								&\frac{d(v_i,t)}{\left(\sum_{l=i}^{t-1}\ln\left(1+c_l\right)\right)^{s}}\\
								&=
								\frac{d(v_i,t)}{\left(\sum_{l=i}^{t-1}\ln\left(1+c_l\right)\right)^{s_0}}\frac{\left(\sum_{l=i}^{t-1}\ln\left(1+c_l\right)\right)^{s_0}}{\left(\sum_{l=i}^{t-1}\ln\left(1+c_l\right)\right)^{s}}\\
								&\longrightarrow X_{i,s_0}\cdot 0 \qquad \text{as }t\rightarrow \infty.
							\end{align*}
							This and \eqref{tazzina} immediately yield $X_{i,s}=0$ almost surely for every $s>1$, as claimed.
						\end{proof}
			
						\begin{proof}[\textbf{\textup{Proof of Theorem \ref{lntlnt}}}]
							First, for $t\geq i+1$ define $Z_i(t)=\exp(d(v_i,t))$. Clearly $\mathbb{E}[Z_i(t)]\leq \exp(2mt)$, for every $t\geq i+1$. Moreover,
							\begin{align}
								\label{rupu}
								&\mathbb{E}[Z_i(t+1)|\mathcal{F}_t]\notag\\
								& =\exp(d(v_i,t))\mathbb{E}[\exp(d(v_i,t+1)-d(v_i,t))|\mathcal{F}_t] \notag\\
								& = Z_i(t)\left(1-\frac{2mt+1-d(v_i,t)}{t[2mt+1-2m]}\right. \notag\\
								&+ \left.e\frac{2mt+1-d(v_i,t)}{t[2mt+1-2m]}\right)^m \notag\\
								& \leq Z_i(t)\left(1+(e -1)c_t\right)^m.
							\end{align}
							The second equality in \eqref{rupu} follows from the fact that
							\begin{align*}
								&d(v_i,t+1)-d(v_i,t)|\mathcal{F}_t\\
								&\sim \text{Bin}\left(m,\frac{2mt+1-d(v_i,t)}{t[2mt+1-2m]}\right).
							\end{align*}
							Then, for every $t\geq i+1$ define the positive supermartingale
							\begin{align*}
								W_i(t)=\frac{Z_i(t)}{\prod_{l=i}^{t-1}\left(1+(e-1)c_l\right)^m}
							\end{align*}
							and call $\eta_i$ the random variable with finite mean to which it converges almost surely. Recalling that $\ln x\leq x-1$ for every $x>0$ we obtain
							\begin{align*}
								\frac{d(v_i,t)}{\ln t}&=\frac{\ln(W_i(t))}{\ln t}
								+m\frac{\sum_{l=i}^{t-1}\ln\left(1+(e-1)c_l\right)}{\ln t} \notag \\
								&\leq \frac{W_i(t)-1}{\ln t}
								+m\frac{\sum_{l=i}^{t-1}\ln\left(1+(e-1)c_l\right)}{\ln t}, \notag
							\end{align*}
							for every $t\geq i+1$. Hence, recalling that $W_i(t)\overset{a.s.}{\longrightarrow}\eta_i\in [0,\infty)$,
							as $t\rightarrow \infty$,
							and using the fact that
							\begin{align*}
								&\sum_{l=i}^{t-1}\ln\left(1+(e-1)c_l\right)\leq (e-1)\sum_{l=i}^{t-1}c_l\\
								&=\sum_{l=i}^{t-1}\ln\left(1+(e-1)c_l\right)\leq (e-1)\sum_{l=i}^{t-1}O(1/l)\\
								&=O(\ln t),
							\end{align*}
							we finally obtain
							\begin{align*}
								\limsup_{t\rightarrow \infty}\frac{d(v_i,t)}{\ln t}<\infty \qquad \text{a.s.}
							\end{align*}
						\end{proof}
		
					\begin{proof}[\textbf{\textup{Proof of Theorem \ref{elvira2}}}]
						By Markov inequality, note first that,
						\begin{align*}
							&\mathbb{P}\left(\frac{d(v_i,t)}{\ln t} - \mathbb{E}\left[\frac{d(v_i,t)}{\ln t}\right]\ge \gamma(t)\right)\\
							&\le \exp \left[-\gamma(t) -
							\mathbb{E}\Bigl[\frac{d(v_i,t)}{\ln t}\Bigr]\right]	\mathbb{E}\left[\exp\left( \frac{d(v_i,t)}{\ln t}\right)\right].
						\end{align*}
						The last factor can be written as
						\begin{align*}
							&\mathbb{E}\left[\exp\left( \frac{d(v_i,t)}{\ln t}\right)\right]
							= \mathbb{E}\left[ \exp\left( \frac{d(v_i,t-1)}{\ln (t-1)}\right)\right.
							\\ &\times\left.\mathbb{E} \left(\left. \exp\left( \frac{d(v_i,t)}{\ln t} - \frac{d(v_i,t-1)}{\ln (t-1)}\right) \right| \mathcal{F}_{t-1} \right) \right] \\
							&\le \mathbb{E}\left[ \exp\left( \frac{d(v_i,t-1)}{\ln (t-1)}\right)\right.\\
							& \times \left. \mathbb{E} \left(\left. \exp\left(\frac{1}{\ln t} \bigl(d(v_i,t) - d(v_i,t-1)\bigr)\right) \right| \mathcal{F}_{t-1} \right) \right] \notag \\
							& = \mathbb{E}\left( \exp\left(\frac{d(v_i,t-1)}{\ln (t-1)}\right)\right.\\
							&\times\left.\left[ 1 + \frac{2m(t-1)+1-d(v_i,t-1)}{(t-1)[2m(t-1)+1-2m]}(e^{\frac{1}{\ln t}}-1) \right]^m \right) \notag \\
							& \le \exp\left[ m\,c_{t-1} (e^{\frac{1}{\ln t}}-1) \right] \mathbb{E}\left( \exp\left(\frac{d(v_i,t-1)}{\ln (t-1)}\right) \right), \notag
						\end{align*}
						which, by iteration gives
						\begin{align*}
							&\mathbb{E}\left[\exp\left( \frac{d(v_i,t)}{\ln t}\right)\right]\\
							&\le \exp \left[ \sum_{j=i+1}^{t-1}m \,c_j (e^{\frac{1}{\ln (j+1)}}-1) \right]
							\exp\left(\frac{m}{\ln i}\right) \\
							& = K \exp \left[ \sum_{j=i_0+1}^{t-1}m \,c_j (e^{\frac{1}{\ln (j+1)}}-1) \right], \notag
						\end{align*}
						where $i_0 = \min \{h \in \mathbb{N}^* \colon 1/\ln (h+1)< 1 \}$. Now, since $e^c-1 \le c+c^2$, $\forall \: c \in (0,1)$
						and therefore that
						\begin{align*}
							e^{\frac{1}{\ln (j+1)}} -1 \le \frac{1}{\ln (j+1)} + \frac{1}{\ln^2(j+1)} \le \frac{2}{\ln (j+1)},
						\end{align*}
						we readily obtain
						\begin{align*}
							&\mathbb{E}\left[\exp\left( \frac{d(v_i,t)}{\ln t}\right)\right]\\
							&\le K \cdot \exp \left[ \sum_{j=i_0+1}^{t-1} m \,c_j \frac{2}{\ln (j+1)} \right].
						\end{align*}
						Hence,
						\begin{align}
							\label{logi}
							&\mathbb{P}\left(\frac{d(v_i,t)}{\ln t} - \mathbb{E}\left[\frac{d(v_i,t)}{\ln t}\right]\ge \gamma(t)\right) \notag\\
							& \le K \exp \left[ - \gamma(t) -\mathbb{E} \left[ \frac{d(v_i,t)}{\ln t} \right] \right. \notag\\
							&+ \left. \sum_{j=i_0+1}^{t-1} m \,c_j \frac{2}{\ln (j+1)}  \right] \overset{t \to \infty}{\longrightarrow} 0.
						\end{align}
										
						Recalling Theorem \ref{sqrtno} and Remark \ref{loo}, for $t$ large enough, we write
						\begin{align*}
							\mathbb{E}\left[\frac{d(v_i,t)}{\ln t}\right]&< 2m \frac{\sum_{l=i}^{t-1}\ln\left(1+c_l\right)\prod_{h=l+1}^{t-1}\left(1-e_h\right)}{\ln t}\\
							&\le 2m \tilde{\kappa},
						\end{align*}
						where $\tilde{\kappa}$ is a suitable constant. Hence, for $\kappa>1$,
						\begin{align*}
							&\mathbb{P}(d(v_i,t) \ge \kappa \gamma(t) \ln t)\\
							& \le \mathbb{P}\big(d(v_i,t) \ge 2 m \tilde{\kappa} \ln t + \gamma(t) \ln t\big) \\
							& \le \mathbb{P} \left(d(v_i,t) \ge \Bigl (\mathbb{E}\Bigl[ \frac{d(v_i,t)}{\ln t} \Bigr] + \gamma(t) \Bigr) \ln t\right). \notag
						\end{align*}
						By using \eqref{logi} we conclude the proof.
					\end{proof}

	\subsubsection*{Acknowledgments} 
		
		Federico Polito and Laura Sacerdote have been partially supported by the project \emph{Memory in Evolving Graphs}
		(Compagnia di San Paolo/Universit\`a di Torino) and by INDAM--GNAMPA and INDAM--GNCS.
		Umberto De Ambroggio has been partially funded by the project \emph{Sviluppo e analisi di processi Markoviani e non Markoviani con applicazioni} (Università di Torino).

	\bibliographystyle{abbrvnat}
	\bibliography{Correct_VersionC}

\end{document}